\newtheorem{theorem}{Theorem}[section]
\newtheorem{lemma}{Lemma}[section]
\numberwithin{equation}{section}
\begin{document}

\title[On asymptotically efficient statistical inference]
{On asymptotically efficient statistical inference on a signal parameter}
\author{M. S. Ermakov}

\address
{Institute of Problems of Mechanical Engineering, RAS\\
Bolshoy pr., V.O., 61\\
199178 St. Petersburg, RUSSIA and\\
St. Petersburg State University,
University str. 28,\\
Petrodvoretz, 198504, St. Petersburg, RUSSIA}
\email{erm2512@mail.ru}

\thanks{Research was supported RFFI Grants  11-01-00577 and 11-01-00769}
\keywords{asymptotic efficiency, Bahadur efficiency,
Pitman efficiency,  local asymptotic minimax lower bound }

\maketitle

\section{Introduction}
In the normal approximation zone the lower bounds of asymptotically efficient statistical inference were comprehensively studied.  The local asymptotic minimax Hajek - Le Cam lower bound  \cite{ha, ib, ku, le, str, van} for estimation and Pitman efficiency  \cite{le, bl, str, van} for hypothesis testing are natural measures of efficiency in parametric statistical inference.  In large deviation zone the Bahadur  efficiencies \cite{bah, bi, bl, ch,  ib, van, pu} are the most widespread measures of asymptotic efficiency of tests and estimators. The paper goal is to study the lower bounds of asymptotic efficiency in the zone of moderate deviation probabilities for the problem of statistical inference on a value of parameter of signal observed in Gaussian white noise. Thus, for this problem, we fill in the gap between asymptotic efficiencies given by the normal approximation and the Bahadur asymptotic efficiencies.

For statistical inference on a parameter of distribution of independent sample this problem was considered in \cite{bor, er03, er12, rad, kal}.   The goal of this paper is to obtain similar results for the problem of statistical inference on a signal parameter. The lower bounds of asymptotic efficiency are given for both logarithmic and sharp asymptotics of moderate deviation probabilities of tests and estimators. The problem of asymptotic efficiency in statistical inference on a signal parameter for both the zone of normal approximation and the zone of large deviation probabilities was studied in a large number of papers ( see
 \cite{ib, ku, bu, ga,  ih,  li} and references therein).

The asymptotic equivalence of different statistical models and the model of signal in Gaussian white noise is very popular theme of research \cite{le, str,  br, go}. These results show the tight relation of  these models and the model of signal in Gaussian white noise. From this viewpoint the paper helps to draw the parallel between the results on moderate deviation probabilities of tests and estimators for different models.

The coverage errors of confidence sets have usually small values. The type I error  probabilities are small in hypothesis testing. These problems are prime examples of applications of large and moderate deviation probabilities in statistics. In particular, the lower bounds of asymptotic efficiency of estimators in moderate deviation zone admit the natural interpretation as the lower bounds of asymptotic efficiency in confidence estimation \cite{er03, er12, wo}.

 \smallskip

The lower bounds of asymptotic efficiency in the problem of signal detection are easily deduced from Neyman - Pearson Lemma and are given for completeness. In the case of one-dimensional parameter the proof of lower bounds  in estimation is based on the lower bounds for hypothesis testing. The proof of local asymptotic minimax lower bounds for estimation of multidimensional parameter is obtained by modification of the proof of similar results for independent sample \cite{er12}.

\smallskip
 We use the following notation.  Denote letters $C, c$ positive constants. For any $x\in R^1$ denote $[x]$ the whole part of $x$. For any event $A$ denote $\chi(A)$  the indicator of this event.
The limits of integration are the same throughout the paper. By this reason, we shall omit the limits of integration. We shall write $\int$  instead of $\int\limits_0^1$. For any function $f \in L_2(0,1)$ denote

$$
\|f\|^2= \int f^2(t) \,dt.
$$
Define the function of standard normal distribution
$$
\Phi(x) = \frac{1}{\sqrt{2\pi}}\int\limits_{-\infty}^x \exp\{-t^2/2\}\,dt, \quad x \in R^1.
$$
We shall omit the index of true value of parameter $\theta_0$ and write ${\mathbf E}[\cdot] = {\mathbf E}_{\theta_0}[\cdot]$ and ${\mathbf P}(\cdot) = {\mathbf P}_{\theta_0}(\cdot)$.

\section{  Lower bounds of asymptotic efficiency}

\subsection{Lower bounds of asymptotic efficiency for logarithmic asymptotic of moderate deviation probabilities of tests and estimators}

Let we observe a realization of random process $Y_\epsilon(t), t \in (0,1), \epsilon > 0$, defined by stochastic differential equation
\begin{equation}\label{1.1}
dY_\epsilon(t) = S(t,\theta)\ dt + \epsilon dw(t).
\end{equation}
Here $S \in L_2(0,1)$  is a signal and $dw(t)$   is Gaussian white noise. Parameter $\theta$ is unknown, $\theta \in \Theta$, and $\Theta$
is open set in $R^d$.

Suppose that $S(t,\theta)$ is differentiable on $\theta$ in $L_2(0,1)$ at the point $\theta_0$, that is, there exists function $S_\theta(t,\theta_0)$ such that
\begin{equation}\label{1.2}
\int (S(t,\theta) - S(t,\theta_0) - (\theta - \theta_0)'S_\theta(t,\theta_0))^2 \, dt = o(|\theta - \theta_0|^2).
\end{equation}
Here $(\theta - \theta_0)'S_\theta(t,\theta_0)$
is inner product of  $\theta - \theta_0$ and $S_\theta(t,\theta_0)$.

Fisher information matrix equals
\begin{equation}
I(\theta) = \int S_\theta(t,\theta) S'_\theta(t,\theta)\, dt.
\end{equation}

Make the following assumption.

{\bf A1}. There holds (\ref{1.2}) at the point $\theta_0 \in \Theta$. The Fisher information matrix $I(\theta_0)$  is positively definite.

For the logarithmic asymptotics the problem of lower bounds of efficiency  for large and moderate deviation probabilities of tests and estimators is usually reduced to one-dimensional. Thus, in this section, we suppose $d=1$.

Consider the problem of testing hypothesis $H_0 : \theta = \theta_0$
versus $H_\epsilon : \theta = \theta_\epsilon:=\theta_0 + u_\epsilon$,
where $u_\epsilon>0$, $u_\epsilon \to 0$,
$\epsilon^{-1}u_\epsilon \to \infty$ as~$\epsilon \to 0$.

For any test $K_\epsilon$ denote  $\alpha(K_\epsilon)$ and $\beta(K_\epsilon)$ respectively its type I and type II error probabilities.

Define the test statistics
\begin{equation}
T = I^{-1/2}(\theta_0) \int S_\theta(t,\theta_0)\, dY_\epsilon(t).
\end{equation}

\begin{theorem}\label{t1}  Assume
{\rm A1}. Then, for any family of tests  $K_\epsilon$
such that $\alpha(K_\epsilon)< c < 1 $ and $\beta(K_\epsilon) <c< 1$, there holds
\begin{equation}\label{2.1}
\limsup_{\epsilon \to 0} (\epsilon^{-1} u_\epsilon I^{1/2}(\theta_0))^{-1} (|2\ln \alpha (K_\epsilon)|^{1/2} + |2\ln \beta (K_\epsilon)|^{1/2} ) \le 1.
\end{equation}
The lower bound in {\rm(\ref{2.1})} is attained on a family of tests generated the test statistics $T$.
\end{theorem}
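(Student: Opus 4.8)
The plan is to reduce the problem to testing between the two Gaussian measures $\mathbf{P}_{\theta_0}$ and $\mathbf{P}_{\theta_\epsilon}$ induced on the path space of $Y_\epsilon$, and then to invoke the Neyman--Pearson Lemma. First I would write down the log-likelihood ratio $\Lambda_\epsilon=\ln(d\mathbf{P}_{\theta_\epsilon}/d\mathbf{P}_{\theta_0})$ by the Cameron--Martin (Girsanov) formula,
\[
\Lambda_\epsilon=\frac{1}{\epsilon^2}\int\bigl(S(t,\theta_\epsilon)-S(t,\theta_0)\bigr)\,dY_\epsilon(t)-\frac{1}{2\epsilon^2}\int\bigl(S^2(t,\theta_\epsilon)-S^2(t,\theta_0)\bigr)\,dt .
\]
A direct computation then shows that under $\mathbf{P}_{\theta_0}$ the statistic $\Lambda_\epsilon$ is Gaussian with mean $-\rho_\epsilon^2/2$ and variance $\rho_\epsilon^2$, and under $\mathbf{P}_{\theta_\epsilon}$ Gaussian with mean $\rho_\epsilon^2/2$ and the same variance, where $\rho_\epsilon^2=\epsilon^{-2}\|S(\cdot,\theta_\epsilon)-S(\cdot,\theta_0)\|^2$. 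Using the differentiability assumption {\rm A1}, that is (\ref{1.2}), together with $\theta_\epsilon-\theta_0=u_\epsilon$, I would expand $\|S(\cdot,\theta_\epsilon)-S(\cdot,\theta_0)\|^2=u_\epsilon^2 I(\theta_0)(1+o(1))$, so that $\rho_\epsilon=\epsilon^{-1}u_\epsilon I^{1/2}(\theta_0)(1+o(1))$; in particular $\rho_\epsilon\to\infty$ because $\epsilon^{-1}u_\epsilon\to\infty$. Thus the problem is exactly that of separating two normal laws whose standardized separation is $\rho_\epsilon$, and the normalizing factor in (\ref{2.1}) is $\rho_\epsilon(1+o(1))$.

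Second, the Neyman--Pearson Lemma delivers the exact frontier of admissible error pairs: for this Gaussian experiment the most powerful level-$\alpha$ test has type II error $\Phi(\Phi^{-1}(1-\alpha)-\rho_\epsilon)$, and hence every test $K_\epsilon$ satisfies the exact inequality
\[
\Phi^{-1}(1-\alpha(K_\epsilon))+\Phi^{-1}(1-\beta(K_\epsilon))\le\rho_\epsilon .
\]
It then remains to convert the normal quantiles into the logarithmic quantities of (\ref{2.1}). Using the two-sided Mills-ratio bounds for the Gaussian tail one has $\Phi^{-1}(1-\gamma)=|2\ln\gamma|^{1/2}(1+o(1))$ as $\gamma\to0$; the direction actually needed in the upper bound is $|2\ln\gamma|^{1/2}\le\Phi^{-1}(1-\gamma)(1+o(1))$.

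Third, and this is the step requiring the most care, I would make the $\limsup$ rigorous by a subsequence argument, since the hypotheses only guarantee $\alpha(K_\epsilon),\beta(K_\epsilon)<c<1$ rather than convergence to $0$. Along any subsequence realizing the $\limsup$ I would pass to a further subsequence on which $\alpha(K_\epsilon)\to\alpha_*$ and $\beta(K_\epsilon)\to\beta_*$ with $\alpha_*,\beta_*\in[0,c]$. If $\alpha_*>0$ then $|2\ln\alpha(K_\epsilon)|^{1/2}$ stays bounded while $\rho_\epsilon\to\infty$, so its normalized contribution vanishes, and symmetrically for $\beta$; if $\alpha_*=0$ the displayed inequality together with the Mills bound gives $|2\ln\alpha(K_\epsilon)|^{1/2}\le\rho_\epsilon(1+o(1))$, since the companion quantile $\Phi^{-1}(1-\beta(K_\epsilon))$ is bounded below by a constant. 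Collecting the cases yields $|2\ln\alpha(K_\epsilon)|^{1/2}+|2\ln\beta(K_\epsilon)|^{1/2}\le\rho_\epsilon(1+o(1))$, and dividing by $\epsilon^{-1}u_\epsilon I^{1/2}(\theta_0)=\rho_\epsilon(1+o(1))$ proves (\ref{2.1}). The main obstacle I anticipate is exactly this bookkeeping: keeping the $o(1)$ terms uniform across the three regimes and verifying that the boundary cases (one of $\alpha_*,\beta_*$ equal to $0$ but not both) do not spoil the bound.

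Finally, for attainment I would compute the law of $T$ directly. Under $\mathbf{P}_{\theta_0}$ and $\mathbf{P}_{\theta_\epsilon}$ it is normal with common variance $\epsilon^2$ and means differing by $m_1-m_0=I^{-1/2}(\theta_0)\int S_\theta(t,\theta_0)\bigl(S(t,\theta_\epsilon)-S(t,\theta_0)\bigr)\,dt=u_\epsilon I^{1/2}(\theta_0)(1+o(1))$, again by (\ref{1.2}), so the standardized separation $(m_1-m_0)/\epsilon$ equals $\rho_\epsilon(1+o(1))$. Taking the family of tests that reject for $T$ exceeding the midpoint threshold $\tfrac12(m_0+m_1)$ gives $\alpha(K_\epsilon)=\beta(K_\epsilon)=1-\Phi(\tfrac12\rho_\epsilon(1+o(1)))\to0$, whence $|2\ln\alpha(K_\epsilon)|^{1/2}+|2\ln\beta(K_\epsilon)|^{1/2}=\rho_\epsilon(1+o(1))$ and the normalized expression in (\ref{2.1}) tends to $1$. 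Hence the bound is sharp and is attained on the family generated by $T$.
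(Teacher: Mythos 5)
Your proof is correct and follows essentially the same route as the paper: a straightforward application of the Neyman--Pearson Lemma to the exactly Gaussian log-likelihood ratio \eqref{3.1}, whose moments \eqref{3.3}--\eqref{3.6} (with every $O(|u_\epsilon|^{2+\lambda})$ replaced by $o(u_\epsilon^2)$, the replacement assumption A1 permits) yield $\rho_\epsilon=\epsilon^{-1}u_\epsilon I^{1/2}(\theta_0)(1+o(1))$. The only real divergence is the attainment step: the paper routes it through Lemma \ref{l1}, comparing $T$ with the Neyman--Pearson statistic $T_1=\xi(\theta_\epsilon,\theta_0)$, whereas you exploit the fact that $T$ is itself exactly Gaussian under both hypotheses and compute its error probabilities directly with a midpoint threshold --- a legitimate shortcut at the logarithmic scale of \eqref{2.1}, since the comparison lemma is really needed only for the sharp asymptotics of Theorem \ref{t3}. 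Your explicit quantile frontier $\Phi^{-1}(1-\alpha)+\Phi^{-1}(1-\beta)\le\rho_\epsilon$, the conversion $|2\ln\gamma|^{1/2}=\Phi^{-1}(1-\gamma)(1+o(1))$, and the subsequence case analysis handling $\alpha_*>0$ or $\beta_*>0$ (using that both error probabilities are bounded away from $1$, so the companion quantile is bounded below) supply precisely the bookkeeping that the paper leaves implicit in its remark that the proof of Theorem \ref{t1} ``is similar'' to that of Theorem \ref{t3}.
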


\begin{theorem}\label{t2} Assume
{\rm А1}. Then, for any  estimator $\widehat\theta_{\epsilon}$, there holds
\begin{equation}\label{2.2}
\lim_{\epsilon\to 0} \sup_{\theta=\theta_0, \theta_0 + 2u_\epsilon} \epsilon^2u_\epsilon^{-2}I^{-1}(\theta_0) \ln {\mathbf P}_\theta (|\widehat\theta_\epsilon - \theta| \ge u_\epsilon)  \ge -\frac{1}{2}.
\end{equation}
\end{theorem}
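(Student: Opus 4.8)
The plan is to reduce the estimation lower bound to the testing lower bound of Theorem~\ref{t1}, as the introduction announces. Given an arbitrary estimator $\widehat\theta_\epsilon$, I would build the test $K_\epsilon$ that rejects $H_0:\theta=\theta_0$ in favour of $H_\epsilon:\theta=\theta_0+2u_\epsilon$ precisely on the event $\{\widehat\theta_\epsilon\ge\theta_0+u_\epsilon\}$, i.e.\ I split the interval $[\theta_0,\theta_0+2u_\epsilon]$ at its midpoint. The separation of the two hypotheses is now $2u_\epsilon$, but this still fulfils the conditions of Theorem~\ref{t1} with $u_\epsilon$ replaced by $2u_\epsilon$, since $2u_\epsilon\to0$ and $\epsilon^{-1}\cdot2u_\epsilon\to\infty$.

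First I would bound the two error probabilities of $K_\epsilon$ by the corresponding deviation probabilities of the estimator. Under $H_0$ the event $\{\widehat\theta_\epsilon\ge\theta_0+u_\epsilon\}$ forces $|\widehat\theta_\epsilon-\theta_0|\ge u_\epsilon$, so $\alpha(K_\epsilon)\le P_0:=\mathbf{P}_{\theta_0}(|\widehat\theta_\epsilon-\theta_0|\ge u_\epsilon)$; under $H_\epsilon$ the complementary event $\{\widehat\theta_\epsilon<\theta_0+u_\epsilon\}$ forces $|\widehat\theta_\epsilon-(\theta_0+2u_\epsilon)|> u_\epsilon$, so $\beta(K_\epsilon)\le P_1:=\mathbf{P}_{\theta_0+2u_\epsilon}(|\widehat\theta_\epsilon-(\theta_0+2u_\epsilon)|\ge u_\epsilon)$. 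Writing $M=\max(P_0,P_1)$, and noting that the factor $\epsilon^2u_\epsilon^{-2}I^{-1}(\theta_0)$ is positive and independent of $\theta$, the supremum in (\ref{2.2}) equals $\epsilon^2u_\epsilon^{-2}I^{-1}(\theta_0)\ln M$; it therefore suffices to prove $\liminf_{\epsilon\to0}\epsilon^2u_\epsilon^{-2}I^{-1}(\theta_0)\ln M\ge-\frac{1}{2}$.

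Next I would feed $\alpha(K_\epsilon)\le M$ and $\beta(K_\epsilon)\le M$ into Theorem~\ref{t1}. Since $t\mapsto|2\ln t|^{1/2}$ is decreasing on $(0,1)$, each of $|2\ln\alpha(K_\epsilon)|^{1/2}$ and $|2\ln\beta(K_\epsilon)|^{1/2}$ dominates $|2\ln M|^{1/2}$, so the left-hand side of the testing inequality is at least $2|2\ln M|^{1/2}$. Combined with the bound $(\epsilon^{-1}\,2u_\epsilon\,I^{1/2}(\theta_0))^{-1}(\,\cdot\,)\le1+o(1)$ supplied by Theorem~\ref{t1}, this gives $|2\ln M|^{1/2}\le(1+o(1))\,\epsilon^{-1}u_\epsilon I^{1/2}(\theta_0)$; squaring and rearranging yields $\ln M\ge-(1+o(1))\frac{1}{2}\epsilon^{-2}u_\epsilon^2 I(\theta_0)$, which after multiplication by $\epsilon^2u_\epsilon^{-2}I^{-1}(\theta_0)$ is exactly the desired inequality. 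Here the two factors of $2$ --- one from the separation $2u_\epsilon$, one from the presence of both error terms --- cancel to leave the constant $\frac{1}{2}$.

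The single point needing care --- and the only real, if minor, obstacle --- is that Theorem~\ref{t1} requires $\alpha(K_\epsilon)$ and $\beta(K_\epsilon)$ to stay below some $c<1$. I would settle this by a subsequence argument: pass to a subsequence realizing the liminf of $\epsilon^2u_\epsilon^{-2}I^{-1}(\theta_0)\ln M$, with value $L$. If $L\ge-\frac{1}{2}$ there is nothing to prove. If $L<-\frac{1}{2}$, then because the prefactor $\epsilon^2u_\epsilon^{-2}\to0$ the product can approach a strictly negative limit only if $\ln M\to-\infty$, hence $M\to0$, so that $\alpha(K_\epsilon),\beta(K_\epsilon)\le M<c$ eventually along the subsequence. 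Theorem~\ref{t1} then applies, and the computation above forces the liminf along the subsequence to be $\ge-\frac{1}{2}$, contradicting $L<-\frac{1}{2}$. This closes the argument.
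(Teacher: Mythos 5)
Your proposal is correct and is essentially the paper's own argument: the paper omits this proof, stating only that Theorem~\ref{t2} is deduced from Theorem~\ref{t1} by the reasoning of \cite{er03}, and that reasoning is exactly your midpoint-test reduction with separation $2u_\epsilon$, the bounds $\alpha(K_\epsilon)\le P_0$, $\beta(K_\epsilon)\le P_1$, and the cancellation of the two factors of $2$. Your subsequence argument handling the side condition $\alpha(K_\epsilon),\beta(K_\epsilon)<c<1$ is a sound way to supply a detail the paper leaves implicit.
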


\subsection{Lower bounds of efficiency for sharp asymptotics of moderate deviation probabilities of tests and estimators. The case of one-dimensional parameter }
Fix $\lambda, 0 < \lambda \le 1$.

The results will be proved in the zone $u_\epsilon=o(\epsilon^\frac{2}{2+\lambda})$
if the following assumption holds.

{\bf A2}. There holds
\begin{align}\label{1.3}
\int (S(t,\theta) -S(t,\theta_0) - (\theta- \theta_0)'S_\theta(t,\theta_0))^2\ dt
&= O(|\theta - \theta_0|^{2+\lambda}),
\\
\label{1.4}
\int (S(t,\theta) - S(t,\theta_0))^2 dt - (\theta -
\theta_0)'I(\theta_0)(\theta - \theta_0) &=
O(|\theta-\theta_0|^{2+\lambda}).
\end{align} 

In the case of multidimensional parameter the lower bound in hypothesis testing depends essentially on the geometry of sets of hypotheses and alternatives. We shall consider only the case of one-dimensional parameter. In this case the problem usually is reduced to the problem of testing simple hypothesis versus simple alternative. Consider the problem of testing hypothesis $H_0 : \theta = \theta_0$
versus alternatives $H_\epsilon : \theta =
\theta_\epsilon:=\theta_0 + u_\epsilon$. We suppose additionally that
$\epsilon^{-2}u_\epsilon^{2+\lambda} \to 0$ as~$\epsilon\to0$.

\begin{theorem}\label{t3} Assume
{\rm A1} and {\rm А2}. Let $\epsilon^{-1}u_\epsilon \to \infty$,
$\epsilon^{-2}u_\epsilon^{2+\lambda} \to 0$ as $\epsilon \to 0$.
For any family of tests $K_\epsilon$ such that  $\alpha_\epsilon:=\alpha(K_\epsilon)< c < 1 $, there holds
\begin{equation}\label{2.3}
\beta(K_\epsilon) \ge \Phi(x_{\alpha_\epsilon} - \epsilon^{-1}
u_\epsilon I^{1/2}(\theta_0))(1 + o(1))
\end{equation}
where $x_{\alpha_\epsilon}$ is defined the equation $\alpha_\epsilon = \Phi(x_{\alpha_\epsilon})$.

The lower bound {\rm(\ref{2.3})} is attained on the tests $L_\epsilon$ generated test statistic ~$T$.

If in  {\rm(\ref{2.3})} the equality is attained, then, for the family of tests  $L_\epsilon$,
$\alpha_\epsilon = \alpha(L_\epsilon)$, there hold
\begin{equation}\label{2.3a}
\lim_{\epsilon\to 0}\alpha_\epsilon^{-1}{\mathbf E}_{\theta_0}[|K_\epsilon - L_\epsilon|] =0
\end{equation}
and
\begin{equation}\label{2.3b}
\lim_{\epsilon\to 0}(\Phi(x_{\alpha_\epsilon} - \epsilon^{-1} u_\epsilon I^{1/2}(\theta_0)))^{-1}{\mathbf E}_{\theta_\epsilon}[|K_\epsilon - L_\epsilon|] =0.
\end{equation}
\end{theorem}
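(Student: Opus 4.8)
The plan is to reduce everything to the Neyman--Pearson lemma, exploiting the fact that for the Gaussian white-noise model the likelihood ratio is \emph{exactly} Gaussian, so that no moderate-deviation limit theorem is needed beyond controlling two remainder terms. Writing $\Delta_\epsilon(t)=S(t,\theta_\epsilon)-S(t,\theta_0)$, the Cameron--Martin formula gives the log-likelihood ratio $\Lambda_\epsilon=\ln(d{\mathbf P}_{\theta_\epsilon}/d{\mathbf P}_{\theta_0})=\epsilon^{-2}\int\Delta_\epsilon\,dY_\epsilon-\tfrac12\epsilon^{-2}\int(S^2(t,\theta_\epsilon)-S^2(t,\theta_0))\,dt$. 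A direct computation shows that under $H_0$ one has $\Lambda_\epsilon\sim N(-\tfrac12\sigma_\epsilon^2,\sigma_\epsilon^2)$ and under $H_\epsilon$ one has $\Lambda_\epsilon\sim N(\tfrac12\sigma_\epsilon^2,\sigma_\epsilon^2)$, where $\sigma_\epsilon^2=\epsilon^{-2}\|\Delta_\epsilon\|^2$. Since this law is continuous, no randomization on the boundary is needed, and the most powerful test $L_\epsilon=\chi(\Lambda_\epsilon>k_\epsilon)$ of level $\alpha_\epsilon$ has type II error exactly $\Phi(x_{\alpha_\epsilon}-\sigma_\epsilon)$; by the Neyman--Pearson lemma every test of level at most $\alpha_\epsilon$ satisfies $\beta(K_\epsilon)\ge\Phi(x_{\alpha_\epsilon}-\sigma_\epsilon)$. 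Thus (\ref{2.3}) is reduced to replacing $\sigma_\epsilon$ by $\epsilon^{-1}u_\epsilon I^{1/2}(\theta_0)$ inside $\Phi$ at the cost of a factor $1+o(1)$.

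Second, I would carry out the moderate-deviation calibration. By (\ref{1.4}), $\|\Delta_\epsilon\|^2=u_\epsilon^2 I(\theta_0)+O(u_\epsilon^{2+\lambda})$, hence $\sigma_\epsilon^2=h_\epsilon^2(1+O(u_\epsilon^\lambda))$ with $h_\epsilon:=\epsilon^{-1}u_\epsilon I^{1/2}(\theta_0)$. Since $\ln\Phi(x_{\alpha_\epsilon}-z)\approx-\tfrac12 z^2$ as $z\to\infty$, perturbing $z$ from $h_\epsilon$ to $\sigma_\epsilon$ changes the exponent by $O(h_\epsilon(\sigma_\epsilon-h_\epsilon))=O(\epsilon^{-2}u_\epsilon^{2+\lambda})$, which tends to $0$ by hypothesis; this is precisely where the restriction $\epsilon^{-2}u_\epsilon^{2+\lambda}\to0$ is used, and it yields $\Phi(x_{\alpha_\epsilon}-\sigma_\epsilon)=\Phi(x_{\alpha_\epsilon}-h_\epsilon)(1+o(1))$. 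To see that the bound is attained by the test based on $T$ I would compare $T$ with $\Lambda_\epsilon$: writing $\Delta_\epsilon=u_\epsilon S_\theta(\cdot,\theta_0)+r_\epsilon$, assumption (\ref{1.3}) gives $\|r_\epsilon\|^2=O(u_\epsilon^{2+\lambda})$, and combining (\ref{1.3}) with (\ref{1.4}) forces the cross term $\int S_\theta(t,\theta_0)r_\epsilon(t)\,dt=O(u_\epsilon^{1+\lambda})$, not merely $O(u_\epsilon^{1+\lambda/2})$. This is the decisive point: it shows that the standardized mean shift of $T$ between the two hypotheses equals $h_\epsilon(1+O(u_\epsilon^\lambda))$, so the $T$-test has the same sharp type II asymptotics as $L_\epsilon$.

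Third, for the stability assertions (\ref{2.3a}) and (\ref{2.3b}) I would use the quantitative form of the Neyman--Pearson identity. With $\alpha(K_\epsilon)=\alpha(L_\epsilon)=\alpha_\epsilon$ and $d{\mathbf P}_{\theta_\epsilon}=e^{\Lambda_\epsilon}d{\mathbf P}_{\theta_0}$ one has the pointwise-nonnegative identity $\beta(K_\epsilon)-\beta(L_\epsilon)={\mathbf E}_{\theta_0}[\,|K_\epsilon-L_\epsilon|\,e^{k_\epsilon}|e^{\Lambda_\epsilon-k_\epsilon}-1|\,]$. If equality holds in (\ref{2.3}) the left-hand side is $o(\Phi(x_{\alpha_\epsilon}-h_\epsilon))$. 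Splitting the domain into $\{\Lambda_\epsilon>k_\epsilon\}$ and $\{\Lambda_\epsilon\le k_\epsilon\}$, and each of these into a region where $|\Lambda_\epsilon-k_\epsilon|$ exceeds a small constant $\eta$ and its complement, on the first region the factor $|e^{\Lambda_\epsilon-k_\epsilon}-1|$ is bounded below, so the corresponding parts of ${\mathbf E}_{\theta_\epsilon}|K_\epsilon-L_\epsilon|$ and of $e^{k_\epsilon}{\mathbf E}_{\theta_0}|K_\epsilon-L_\epsilon|$ are $o(\Phi(x_{\alpha_\epsilon}-h_\epsilon))$; on the complementary strip one bounds the contribution by ${\mathbf P}_{\theta_\epsilon}(k_\epsilon<\Lambda_\epsilon\le k_\epsilon+\eta)$ and ${\mathbf P}_{\theta_0}(k_\epsilon-\eta\le\Lambda_\epsilon\le k_\epsilon)$, which by the explicit Gaussian laws of $\Lambda_\epsilon$ are $O(\eta)$ times $\Phi(x_{\alpha_\epsilon}-h_\epsilon)$ and $\alpha_\epsilon$ respectively. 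Letting $\eta\to0$ after $\epsilon\to0$, and using the comparison $e^{-k_\epsilon}\Phi(x_{\alpha_\epsilon}-h_\epsilon)=O(\alpha_\epsilon)$ that ties the two normalizations together, gives (\ref{2.3a}) and (\ref{2.3b}).

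The routine parts are the Cameron--Martin computation and the Gaussian ROC. The main obstacle is the sharp, rather than logarithmic, bookkeeping. In the lower bound one must verify that both remainders in A2, together with the Mills-ratio correction inside $\Phi$, enter only through the factor $1+o(1)$, which is exactly why both (\ref{1.3}) and (\ref{1.4}) and the calibration $\epsilon^{-2}u_\epsilon^{2+\lambda}\to0$ are imposed. In the stability proof the delicate step is controlling the mass of $K_\epsilon-L_\epsilon$ on the strip near the threshold $\Lambda_\epsilon=k_\epsilon$, where the weight $|e^{\Lambda_\epsilon-k_\epsilon}-1|$ degenerates; this forces the use of precise Gaussian tail (Mills-ratio) estimates and of the relation between the scales $\alpha_\epsilon$, $\Phi(x_{\alpha_\epsilon}-h_\epsilon)$ and $e^{k_\epsilon}$.
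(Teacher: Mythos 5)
Your proposal is correct, and for the lower bound (\ref{2.3}) it is essentially the paper's own argument: both rest on the Neyman--Pearson lemma applied to the exactly Gaussian log-likelihood ratio (\ref{3.1}), on the calibration $\rho^2(\theta_\epsilon,\theta_0)=u_\epsilon^2 I(\theta_0)+O(u_\epsilon^{2+\lambda})$ coming from (\ref{1.4}) (the paper's (\ref{3.4})--(\ref{3.6})), and on the fact that the resulting perturbation of the argument of $\Phi$ contributes $O(\epsilon^{-2}u_\epsilon^{2+\lambda})=o(1)$ to the exponent; moreover, the estimate you single out as decisive, $\int S_\theta(t,\theta_0)r_\epsilon(t)\,dt=O(u_\epsilon^{1+\lambda})$ obtained by playing (\ref{1.3}) against (\ref{1.4}), is literally the paper's (\ref{3.12})--(\ref{3.13}). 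Where you genuinely diverge is the attainment step: the paper routes it through the Gaussian comparison Lemma \ref{l1}, checking its covariance hypotheses for $\eta_{1\epsilon}=u_\epsilon^{-1}\xi(\theta_\epsilon,\theta_0)$ and $\eta_{2\epsilon}=u_\epsilon^{-1}(\xi(\theta_\epsilon,\theta_0)-u_\epsilon\tau)$ under both hypothesis and alternative in (\ref{3.10})--(\ref{3.19}), whereas you exploit that $T$ is a linear functional of $Y_\epsilon$ and hence \emph{exactly} Gaussian with the same variance under both measures, so the mean shift $\epsilon^{-1}u_\epsilon I^{1/2}(\theta_0)(1+O(u_\epsilon^\lambda))$ already yields the sharp type II asymptotics; in the white-noise model this shortcut is legitimate and cleaner, while the paper's lemma is the form that generalizes (it reappears in the multidimensional argument for Theorem \ref{t5}). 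For (\ref{2.3a})--(\ref{2.3b}) the paper gives no proof at all, deferring to Theorems 2.3 and 2.7 of \cite{er03}; your quantitative Neyman--Pearson identity with the pointwise-nonnegative weight $e^{k_\epsilon}|e^{\Lambda_\epsilon-k_\epsilon}-1|$, combined with the threshold-strip splitting and Gaussian (Mills-ratio) control, is a correct standalone substitute in the spirit of that reference. Two points need tightening. First, your stability argument establishes $L^1$-closeness of $K_\epsilon$ to the \emph{exact} likelihood-ratio test $\chi(\Lambda_\epsilon>k_\epsilon)$, but (\ref{2.3a})--(\ref{2.3b}) are asserted for the test $L_\epsilon$ generated by the statistic $T$; you must still show ${\mathbf E}_{\theta_0}[|\chi(\Lambda_\epsilon>k_\epsilon)-L_\epsilon|]=o(\alpha_\epsilon)$ and its analogue under $\theta_\epsilon$, which follows by one more strip estimate applied to the jointly Gaussian pair $(T,\Lambda_\epsilon)$ whose correlation is $1-O(u_\epsilon^\lambda)$ --- in substance another application of Lemma \ref{l1}, so this is a patch rather than a structural defect. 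Second, your bound $O(h_\epsilon(\sigma_\epsilon-h_\epsilon))$ for the change of exponent inside $\Phi$ tacitly assumes $|x_{\alpha_\epsilon}|=O(\epsilon^{-1}u_\epsilon)$, i.e.\ that $\alpha_\epsilon$ does not decay faster than the moderate-deviation scale; this implicit restriction is shared by the paper's treatment, so it does not distinguish your route from theirs, but it deserves an explicit sentence.
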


\noindent
{\bf Remark.} For $u_\epsilon = \epsilon u, u > 0$,  the lower bound (\ref{2.3}) becomes the lower bound for the Pitman efficiency.

Define the statistic
$$
T_0 = I^{-1/2}(\theta_0) \int S_\theta(t,\theta_0)\, dw(t).
$$

\begin{theorem}\label{t4} Let $d=1$ and let
{\rm A1} and {\rm A2} be satisfied. Let $\epsilon^{-1}u_\epsilon \to \infty, \epsilon^{-2}u_\epsilon^{2+\lambda} \to 0$ as $\epsilon \to 0$. Then, for any estimator $\widehat \theta_\epsilon$, there holds
\begin{equation}\label{2.4}
\lim\inf_{\epsilon\to 0} \sup_{|\theta-\theta_0| < C_\epsilon u_\epsilon} \frac{{\mathbf P}_\theta( |\widehat\theta_\epsilon - \theta| > u_\epsilon)}{2\Phi(-\epsilon^{-1}I^{1/2}(\theta_0) u_\epsilon)} \ge 1
\end{equation}
for any family of constants $C_\epsilon \to \infty$ as $\epsilon \to 0$.

If the equality is attained in {\rm(\ref{2.4})} for $C_\epsilon \to \infty, \epsilon^{-2} C_\epsilon^{2+\lambda}u_\epsilon^{2+\lambda} \to 0$ as $\epsilon \to 0$, then, for any family of parameters $\theta_\epsilon, |\theta_\epsilon - \theta_0| < C_\epsilon u_\epsilon$, there holds
\begin{multline}\label{2.4a}
\lim_{\epsilon \to 0}
\Big(\Phi(-\epsilon^{-1}I^{1/2}(\theta_0) u_\epsilon)\Big)^{-1}
{\mathbf E}_{\theta_\epsilon}
\Big[|\chi(|\widehat\theta_\epsilon - \theta_\epsilon| > u_\epsilon)\\
- \chi(|I^{-1/2}(\theta_0) T_0 - (\theta_\epsilon - \theta_0)| > u_\epsilon)|
\Big] = 0.
\end{multline}
\end{theorem}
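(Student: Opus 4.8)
The plan is to derive both (\ref{2.4}) and (\ref{2.4a}) from the sharp two-point testing bound of Theorem \ref{t3}, using the standard device that converts coverage probabilities of an estimator into error probabilities of tests, together with an averaging argument over a fine grid of parameter values that produces the factor $2$. Throughout write $h=\epsilon^{-1}u_\epsilon I^{1/2}(\theta_0)$, so that $h\to\infty$ and $2\Phi(-\epsilon^{-1}I^{1/2}(\theta_0)u_\epsilon)=2\Phi(-h)$; this quantity is exactly the coverage error of the efficient procedure based on $T_0$ that appears in (\ref{2.4a}). The first step is localisation: by the Cameron--Martin formula the log-likelihood ratio of $P_\theta$ with respect to $P_{\theta_0}$ equals $\epsilon^{-2}\int(S(t,\theta)-S(t,\theta_0))\,dY_\epsilon(t)-\tfrac12\epsilon^{-2}\int(S(t,\theta)^2-S(t,\theta_0)^2)\,dt$, and {\rm A2} together with $\epsilon^{-2}u_\epsilon^{2+\lambda}\to0$ reduces, on the $C_\epsilon u_\epsilon$-neighbourhood of $\theta_0$, the family $\{P_\theta\}$ to a Gaussian shift experiment in the statistic $T_0$ with Fisher information $I(\theta_0)$. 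The extra hypothesis $\epsilon^{-2}C_\epsilon^{2+\lambda}u_\epsilon^{2+\lambda}\to0$ is precisely what makes the remainder negligible uniformly over this widening neighbourhood.

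For the lower bound (\ref{2.4}) I would argue by averaging. Fix $M=M_\epsilon\to\infty$ with $M_\epsilon u_\epsilon=o(C_\epsilon u_\epsilon)$ and with the expansion remainder on the interval of radius $2M_\epsilon u_\epsilon$ still negligible, and set $\theta_j=\theta_0+2ju_\epsilon$, $b_j=\theta_j+u_\epsilon$ for $-M\le j\le M$. Then
$$
(2M+1)\sup_{|\theta-\theta_0|<C_\epsilon u_\epsilon}{\mathbf P}_\theta(|\widehat\theta_\epsilon-\theta|>u_\epsilon)\ge\sum_{j=-M}^{M}{\mathbf P}_{\theta_j}(|\widehat\theta_\epsilon-\theta_j|>u_\epsilon),
$$
and, regrouping the two one-sided tails of each summand at the interior boundaries $b_j$, the right-hand side dominates $\sum_j\big({\mathbf P}_{\theta_j}(\widehat\theta_\epsilon>b_j)+{\mathbf P}_{\theta_{j+1}}(\widehat\theta_\epsilon\le b_j)\big)$. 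Each bracket is $\alpha+\beta$ for the test ``decide $\theta_{j+1}$ iff $\widehat\theta_\epsilon>b_j$'' of $\theta_j$ against $\theta_{j+1}$, two points at distance $2u_\epsilon$ with midpoint $b_j$; hence by Theorem \ref{t3} (applied with $u_\epsilon$ replaced by $2u_\epsilon$) each bracket is at least the minimal Bayes error $2\Phi(-h)(1+o(1))$, uniformly in $j$. Dividing by $2M+1$ and letting $\epsilon\to0$ gives (\ref{2.4}). The factor $2$ appears automatically: every interior point contributes two tails while there are $2M$ interior boundaries, so the average of the two-sided errors converges to the per-boundary Bayes risk.

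For the equality statement (\ref{2.4a}) the key observation is that when equality holds in (\ref{2.4}) the averaged inequality above must be asymptotically saturated, so every boundary bracket must attain the minimal Bayes risk $2\Phi(-h)(1+o(1))$; that is, at each $b_j$ the test induced by $\widehat\theta_\epsilon$ is asymptotically optimal in the two-point problem. I would then invoke the uniqueness part of Theorem \ref{t3}, relations (\ref{2.3a})--(\ref{2.3b}): an asymptotically optimal test must coincide, up to an $L_1$-error that is negligible relative to the pertinent error probability, with the test generated by $T$, equivalently by $T_0$. Translating this $L_1$-closeness of the decision sets $\{\widehat\theta_\epsilon>b_j\}$ back to the coverage events and combining the two boundaries adjacent to a given $\theta_\epsilon$, one obtains that $\chi(|\widehat\theta_\epsilon-\theta_\epsilon|>u_\epsilon)$ agrees with $\chi(|I^{-1/2}(\theta_0)T_0-(\theta_\epsilon-\theta_0)|>u_\epsilon)$ up to a term whose ${\mathbf E}_{\theta_\epsilon}$-expectation is $o(\Phi(-h))$, which is precisely (\ref{2.4a}).

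The main obstacle is this last transfer of the sharp (not merely logarithmic) optimality from testing to estimation. Since all quantities involved are rare-event probabilities of order $\Phi(-h)\to0$, every approximation in the likelihood expansion and in the grid argument must be controlled in \emph{relative} terms, i.e. shown to be $o(\Phi(-h))$ after normalisation, rather than merely $o(1)$. This forces the uniform remainder control over the whole widening neighbourhood, where exactly $\epsilon^{-2}C_\epsilon^{2+\lambda}u_\epsilon^{2+\lambda}\to0$ is used, and it is where relations (\ref{2.3a})--(\ref{2.3b}), which already encode the sharp uniqueness on the normalised scale, do the essential work. The remaining steps --- the Gaussian evaluation $\min(\alpha+\beta)=2\Phi(-h)(1+o(1))$ for separation $2u_\epsilon$ and the bookkeeping of the boundary summands at $j=\pm M$ --- are routine.
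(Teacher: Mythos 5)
Your derivation of the lower bound \eqref{2.4} is sound, and it is essentially the argument the paper intends: the paper itself proves only the uniform moving-two-point testing bound \eqref{2.3db} (hypotheses $\theta_0+C_1u_\epsilon$ versus $\theta_0+C_2u_\epsilon$) and defers the rest to the proof of Theorem 2.7 in \cite{er03}; your grid of $2M_\epsilon+1$ points with spacing $2u_\epsilon$, the regrouping of one-sided tails at the $2M_\epsilon$ interior midpoints, the evaluation $\min(\alpha+\beta)=2\Phi(-h)(1+o(1))$ with $h=\epsilon^{-1}u_\epsilon I^{1/2}(\theta_0)$, and the slow choice of $M_\epsilon$ to keep the remainders uniform reproduce that step correctly, including the factor $2$.

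The genuine gap is in the passage to \eqref{2.4a}: the claim that, under equality in \eqref{2.4}, \emph{every} boundary bracket must attain the minimal Bayes risk does not follow from saturation of the averaged inequality. The sum of the brackets is squeezed only between $2M_\epsilon\cdot 2\Phi(-h)(1+o(1))$ and $(2M_\epsilon+1)\cdot 2\Phi(-h)(1+o(1))$, so the slack is a full unit of $2\Phi(-h)$ --- the same order as a single bracket; one bracket can be as large as $4\Phi(-h)$ (two adjacent two-sided errors) without disturbing the average. Worse, \eqref{2.4a} is asserted for \emph{every} family $\theta_\epsilon$ in the window, so you need saturation at the two specific brackets adjacent to the given $\theta_\epsilon$, and an averaging argument cannot localize there. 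The repair, which is the actual content of the proof of Theorem 2.7 in \cite{er03} to which the paper appeals, is a propagation argument along the whole chain: write $R(\theta_j)=\kappa_j\Phi(-h)$ and $L(\theta_j)=\mu_j\Phi(-h)$ for the right and left tail errors at the grid points. Equality in \eqref{2.4} gives the pointwise bound $\kappa_j+\mu_j\le 2+o(1)$, while the \emph{sharp} frontier \eqref{2.3db}, via the Gaussian tail calculus $\Phi(x_{\kappa\Phi(-h)}-2h)=\kappa^{-1}\Phi(-h)(1+o(1))$ for fixed $\kappa>0$, gives $\kappa_j\mu_{j+1}\ge 1-o(1)$, hence $\mu_{j+1}\ge(2-\mu_j)^{-1}-o(1)$. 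The map $\mu\mapsto(2-\mu)^{-1}$ has the unique fixed point $1$, and any deviation $\delta$ amplifies and blows past the bound $\mu\le 2+o(1)$ within $O(\delta^{-1})$ steps of the chain; since $M_\epsilon\to\infty$, this forces $\kappa_j=1+o(1)$, $\mu_j=1+o(1)$ at interior points. Only after this does each induced one-sided test sit at the \emph{symmetric} point of the Neyman--Pearson frontier, i.e.\ attain equality in \eqref{2.3db} with $\alpha=\beta=\Phi(-h)(1+o(1))$, which is the hypothesis under which the uniqueness relations \eqref{2.3a}--\eqref{2.3b} (in the form \eqref{2.3ab}--\eqref{2.3bb}) can be invoked to yield the $L_1$-matching with the $T_0$-procedure in \eqref{2.4a}. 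Note also that for $\theta_\epsilon$ near the edge of the $C_\epsilon u_\epsilon$-window the propagation needs room on one side, which is where one exploits that equality in \eqref{2.4} holds for the admissible enlarged windows with $\epsilon^{-2}C_\epsilon^{2+\lambda}u_\epsilon^{2+\lambda}\to 0$ --- the condition you correctly flagged, but used for the wrong step.
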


Theorems \ref{t1}, \ref{t2}, \ref{t3} and \ref{t4} are versions of Theorems 2.2, 2.5, 2.3 и 2.7, established in  \cite{er03}   for the problems of statistical inference on a parameter of distribution of independent sample.

\subsection{Lower bound of  efficiency for sharp asymptotic of confidence estimation of multidimensional parameter}
For multidimensional parameter we can derive a version of Theorem \ref{t4} with some additional assumptions.

We say that the set $\Omega \subset R^d$ is central-symmetric if $x \in \Omega$ implies $-x \in \Omega$.
Denote $\partial\Omega$ the  boundary of~$\Omega$.

Make the following assumptions.

{\bf A3}
For each $v \in R^d$, there holds
\begin{equation}\label{1.5}
v'I(\theta)v- v'I(\theta_0)v = O(|v|^2 |\theta - \theta_0|^\lambda).
\end{equation}

{\bf A4}. The set $\Omega$ is bounded, convex and central symmetric. The boundary $\partial\Omega$ is $C^2$-manifold. The principal curvatures in each point of $\partial\Omega$ are negative.

\begin{theorem}\label{t5} Assume
{\rm А1--А3} for all $\theta_0 \in \Theta$. Let the set
$\Omega$ satisfies {\rm A4}. Let $\Theta_0$ is open bounded set such that $\partial\Theta_0\! \subset\! \Theta$.
Let $\epsilon^{-1}u_\epsilon \!\to\! \infty$,
$\epsilon^{-2}u_\epsilon^{2+\lambda} \to 0$ as $\epsilon \to 0$. Then, for any estimator $\widehat \theta_\epsilon$, there holds
\begin{equation}\label{2.7}
\liminf_{\epsilon\to 0}\inf_{\theta_0\in\Theta_0}\sup_{|\theta-\theta_0|<C_\epsilon u_\epsilon}\frac{{\mathbf P}_\theta
(I^{1/2}(\theta_0)(\widehat\theta_\epsilon
- \theta) \notin u_\epsilon \Omega)} {{\mathbf P}(\zeta\notin
\epsilon^{-1}u_\epsilon\Omega)}\ge 1
\end{equation}
with $C_\epsilon \to \infty$ as $\epsilon \to 0$. Here $\zeta$ is Gaussian random vector with identity covariance matrix and ${\mathbf E}[\zeta] = 0$.
\end{theorem}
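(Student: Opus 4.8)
The plan is to reduce \eqref{2.7} to the limiting Gaussian shift experiment, where the sharp bound comes from Anderson's lemma, and then to control the reduction with moderate-deviation rather than weak-convergence accuracy; this is the multidimensional analogue of the proof of Theorem~\ref{t4} and follows the scheme of \cite{er12}.

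\textbf{Step 1 (local Gaussian structure).} First I would write the Cameron--Martin density of $\mathbf P_\theta$ against $\mathbf P_{\theta_0}$ for \eqref{1.1}; with $h=\theta-\theta_0$,
\[
\log\frac{d\mathbf P_\theta}{d\mathbf P_{\theta_0}}
=\epsilon^{-1}\!\int (S(t,\theta)-S(t,\theta_0))\,dw(t)
-\tfrac12\epsilon^{-2}\|S(\cdot,\theta)-S(\cdot,\theta_0)\|^2 .
\]
Replacing the increment by its linearization $h'S_\theta(\cdot,\theta_0)$ via A2 and $\|S(\cdot,\theta)-S(\cdot,\theta_0)\|^2$ by $h'I(\theta_0)h$ via \eqref{1.4}, this becomes the expansion
\[
\log\frac{d\mathbf P_\theta}{d\mathbf P_{\theta_0}}
=\epsilon^{-1}h'I^{1/2}(\theta_0)T_0-\tfrac12\epsilon^{-2}h'I(\theta_0)h+r_\epsilon(h),
\qquad r_\epsilon(h)=O(\epsilon^{-2}|h|^{2+\lambda}),
\]
where $T_0$ is standard Gaussian under $\mathbf P_{\theta_0}$. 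Since $\epsilon^{-2}u_\epsilon^{2+\lambda}\to0$, I can fix a slowly growing $\rho_\epsilon\to\infty$ for which $r_\epsilon(h)\to0$ uniformly on $|h|\le\rho_\epsilon u_\epsilon$; by A3 the same bounds, and the comparison of $I^{1/2}(\theta)$ with $I^{1/2}(\theta_0)$ at the scale $|h|\lesssim u_\epsilon$, hold uniformly in $\theta_0\in\overline{\Theta_0}$, which will deliver the $\inf_{\theta_0\in\Theta_0}$.

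\textbf{Step 2 (reduction to a Bayes coverage problem).} Writing $\Omega_0=I^{-1/2}(\theta_0)\Omega$ and using the central symmetry of $\Omega$ from A4, the event in \eqref{2.7} is $\{\theta\notin\widehat\theta_\epsilon+u_\epsilon\Omega_0\}$, i.e. non-coverage of $\theta$ by the confidence set $\widehat\theta_\epsilon+u_\epsilon\Omega_0$. I would bound the supremum below by the Bayes risk for a prior $\mu_\epsilon$ uniform on $\{|h|\le\rho_\epsilon u_\epsilon\}$ (with $\rho_\epsilon\le C_\epsilon$, legitimate as $C_\epsilon\to\infty$, so the support lies in the ball of \eqref{2.7}), which equals the $\mu_\epsilon$-average of $\mathbf P_\theta(\theta\notin\widehat\theta_\epsilon+u_\epsilon\Omega_0)$ and, by Step 1, is governed by the posterior of $\theta$. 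Because $\mu_\epsilon$ is flat on the scales $\epsilon\ll u_\epsilon\ll\rho_\epsilon u_\epsilon$, this posterior is asymptotically $N(\widehat\theta^*,\epsilon^2I^{-1}(\theta_0))$ with $\widehat\theta^*=\theta_0+\epsilon I^{-1/2}(\theta_0)T_0$, and the prior truncation is negligible since $|\widehat\theta^*-\theta_0|=O_p(\epsilon)$.

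\textbf{Step 3 (Anderson's lemma and uniformity).} For a fixed confidence shape the posterior non-coverage probability with center $c$ is, after whitening by $\epsilon^{-1}I^{1/2}(\theta_0)$, of the form $\mathbf P(\eta+a\notin\epsilon^{-1}u_\epsilon\Omega)$ with $\eta$ standard Gaussian and $a=\epsilon^{-1}I^{1/2}(\theta_0)(\widehat\theta^*-c)$. Since $\Omega$ is convex and central symmetric, Anderson's lemma shows this is minimized at $a=0$, i.e. at $c=\widehat\theta^*$, with minimal value $\mathbf P(\eta\notin\epsilon^{-1}u_\epsilon\Omega)=\mathbf P(\zeta\notin\epsilon^{-1}u_\epsilon\Omega)$, the benchmark. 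Hence for every estimator the Bayes risk, and therefore the supremum in \eqref{2.7}, is at least the benchmark times $1+o(1)$. The only approximation to be controlled is the factor $e^{r_\epsilon(h)}$ between the true posterior and its Gaussian surrogate on the region carrying the non-coverage probability; this region is the thin layer of $\partial(\epsilon^{-1}u_\epsilon\Omega)$ nearest the posterior mean, and the negativity of the principal curvatures in A4 (uniform strict convexity) keeps it concentrated near the closest boundary point, so that $|h|$ stays in the range where $r_\epsilon(h)=o(1)$ and the Gaussian approximation is valid. Uniformity over $\theta_0\in\Theta_0$ then follows from A3.

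\textbf{Main obstacle.} The crux is that this is a moderate-deviation statement with a \emph{sharp} constant: as $\epsilon^{-1}u_\epsilon\to\infty$ the measures $\mathbf P_\theta$ and $\mathbf P_{\theta_0}$ become asymptotically singular, so the standard LAN/contiguity weak-convergence machinery is unavailable and the remainder $r_\epsilon(h)$ must be controlled uniformly over a ball growing like $\rho_\epsilon u_\epsilon$, precisely on the rare boundary configurations that carry the small non-coverage probability. Making the Gaussian approximation to the posterior valid there, while the contributing region shrinks at the curvature-determined rate, is the technical heart of the argument and the step where A4 is genuinely used; everything else is the adaptation of the one-dimensional Theorem~\ref{t4} to the geometry of $\Omega$ in the spirit of \cite{er12}.
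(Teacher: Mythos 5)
You have the right opening move---the paper's proof also begins by bounding the minimax risk from below by a Bayes risk for a prior spread uniformly over a neighborhood of radius $C_\epsilon u_\epsilon$, and it ends with an Anderson-type ``infimum attained at zero'' computation for the limiting Gaussian integral (Lemma 3.1 of \cite{er12}, invoked at (\ref{3.24}))---but there is a genuine gap at precisely the place you yourself flag as the ``main obstacle,'' and the way Step 1 is set up would fail. The remainder $r_\epsilon(h)$ contains the stochastic term $\epsilon^{-1}\int(S(t,\theta)-S(t,\theta_0)-h'S_\theta(t,\theta_0))\,dw(t)$, which is a Gaussian random variable with standard deviation of order $\epsilon^{-1}|h|^{1+\lambda/2}$; it is not $O(\epsilon^{-2}|h|^{2+\lambda})$ pointwise, and can only be made small outside exceptional events. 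Since the benchmark ${\mathbf P}(\zeta\notin\epsilon^{-1}u_\epsilon\Omega)$ is itself of order $\exp\{-c\,\epsilon^{-2}u_\epsilon^2\}$, those exceptional events must have probability negligible \emph{relative to the benchmark}, simultaneously over the huge number of parameter points carrying the prior, and moreover conditionally on the rare noise configurations (large $\tau$) that produce non-coverage. Your appeal to posterior normality (``the posterior is asymptotically $N(\widehat\theta^*,\epsilon^2 I^{-1}(\theta_0))$'') is a Bernstein--von Mises-type statement, accurate at best in total variation, and says nothing when divided by an exponentially small quantity; so Step 3, as written, is an assertion rather than an argument.

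The paper supplies exactly the machinery you are missing. It conditions on the events $W_{i\epsilon}$ localizing $\tau=\int S_\theta(t,0)\,dw(t)$ in small cubes $\epsilon^{-1}\Gamma_{i\epsilon}$ and evaluates the posterior risk separately on each; it replaces your isotropic ball prior by a lattice partitioned into very narrow parallelepipeds $\Lambda_{ie}$ aligned with the points of $\partial\Omega$ nearest the origin---this is where A4 enters constructively, not only through a symmetrization lemma; it proves the conditional moderate-deviation inequality (\ref{5}) for the linearization error of the log-likelihood, and makes the control uniform over the lattice by the chaining decomposition (\ref{3.27})--(\ref{3.48}) together with the tail estimates (\ref{3.49})--(\ref{3.50}), which rest on the covariance bounds (\ref{3.51})--(\ref{3.53}). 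Note also that plain Anderson's lemma does not suffice once the risk has been decomposed: the minimization over centers $t$ is performed for integrals restricted to the narrow truncated sets $K_{ie\kappa}$, not over all of $R^d$, which is why the special Lemma 3.1 of \cite{er12} is needed there. Your proposal is an accurate road map of the statement and of the reduction to the Gaussian shift benchmark, but without a substitute for this conditioning-plus-chaining scheme the sharp constant in (\ref{2.7}) is not established.
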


Theorems \ref{t4} and \ref{t5} can be considered as lower bounds of asymptotically efficient confidence estimation. In confidence estimation the covariance matrices of estimators  are often unknown. Then the confidence sets are defined on the base of pivotal statistics. Pivotal statistics are widely applied in hypothesis testing as well. For such a setup one can modify Theorems   \ref{t4} and \ref{t5}. The general approach to such a setup is given in Theorem 2.2 in    \cite{er12}. We do not discuss this problem in details in order not to overload the paper.

\section{Proofs of Theorems \ref{t1}, \ref{t2} and \ref{t3}, \ref{t4}}
Proofs of Theorems \ref{t1} and \ref{t3} are based on straightforward application  of Neyman - Pearson Lemma and analysis of asymptotic distribution of logarithm of likelihood ratio.  Theorems \ref{t2} and \ref{t4} are deduced from Theorems   \ref{t1} and \ref{t3} respectively using the same reasoning  as in \cite{er03}  (Theorems 2.3 and 2.7). In particular,  (\ref{2.4a}) follows from (\ref{2.3a})  and (\ref{2.3b}). By this reason, the proofs of Theorems \ref{t2} and \ref{t4} are omitted. In \cite{er03} similar results were obtained for the problem of statistical inference on a parameter of distribution of independent sample.

The proof will be given for Theorem \ref{t3}. The proof of Theorem \ref{t1} is similar.  It suffices to replace only in all estimates $O(|u_\epsilon|^{2+\lambda})$  with $o(|u_\epsilon|^2)$. Correctness of such a replacement follows from Assumption  A1.

Suppose that the hypothesis is valid. Then the logarithm of likelihood ratio
 (see \cite{ib, ku}) equals
\begin{equation}\label{3.1}
\begin{split}
&L( \theta_0+u_\epsilon,\theta_0)  \\
&:= \epsilon^{-2}\!\!\int (S(t,\theta_\epsilon) \!-\! S(t,\theta_0))\, dY_\epsilon(t)
\!-\! (2\epsilon^2)^{-1}(\|S(t,\theta_\epsilon)\|^2\! -\! \|S(t,\theta_0)\|^2)\\
& =
\epsilon^{-1}\!\!\!\int (S(t,\theta_\epsilon)\! -\! S(t,\theta_0)) dw(t)
\!-\! (2\epsilon^2)^{-1}\|S(t,\theta_\epsilon) \!-\! S(t,\theta_0)\|^2.
\end{split}
\end{equation}
Therefore the test statistics can be defined as
\begin{equation}\label{3.2}
T_{1} =\xi(\theta_\epsilon,\theta_0) =\epsilon^{-1} \int (S(t,\theta_\epsilon) - S(t,\theta_0))\, dY_\epsilon(t).
\end{equation}
 By A2, for the proof of asymptotic efficiency of tests $L_\epsilon$, it suffices to estimate difference of stochastic parts of   $T_1$ and $(\theta_\epsilon - \theta_0)I^{1/2}(\theta_0) T$, defined by statistics
$$
T_{1\epsilon}   =  \epsilon^{-1} \int (S(t,\theta_\epsilon) - S(t,\theta_0))\, dw(t)
$$
and $\epsilon^{-1}I^{1/2}(\theta_0)(\theta_\epsilon -\theta_0) T_0$ respectively.

Denote
\begin{equation*}
\rho^2(\theta_\epsilon,\theta_0) = \|S(t,\theta_\epsilon) - S(t,\theta_0)\|^2.
\end{equation*}
By straightforward calculations, we get
\begin{equation}\label{3.3}
{\mathbf E}_{\theta_0} [T_{1\epsilon}] = 0
\end{equation}
and, by (\ref{1.4}), we have
\begin{equation}\label{3.4}
{\mathbf E}_{\theta_0} [T^2_{1\epsilon}] = \epsilon^{-2}\rho^2(\theta_\epsilon,\theta_0)
=\epsilon^{-2} u_\epsilon^2 I(\theta_0)  + O(\epsilon^{-2} u_\epsilon^{2+\lambda}).
\end{equation}
For the alternative, we get
\begin{align}\label{3.5}
{\mathbf E}_{\theta_\epsilon} [T_{1\epsilon}]& = \epsilon^{-1} {\mathbf E}_{\theta_0} [\xi(\theta_\epsilon,\theta_0) \exp\{\epsilon^{-1}\xi(\theta_\epsilon,\theta_0) - (2\epsilon^2)^{-1}\rho^2(\theta_\epsilon,\theta_0)\}]\notag\\
& =
\epsilon^{-2}\rho^2(\theta_\epsilon,\theta_0)= \epsilon^{-2}(u_\epsilon^2 I(\theta_0) + O(u_\epsilon^{2+\lambda}))
\end{align}
and
\begin{equation}\label{3.6}
\mathbf{Var}_{\theta_\epsilon} [T_{1\epsilon}^2] = \epsilon^{-2}\rho^2(\theta_\epsilon,\theta_0)
=\epsilon^{-2} u^2_\epsilon I(\theta_0) + O(u_\epsilon^{2+\lambda}).
\end{equation}
The lower bound (\ref{2.3}) follows from (\ref{3.1}) - (\ref{3.6})

The proof of asymptotic efficiency of test statistics   $T$ is based on the following lemma.

\begin{lemma} \label{l1} Let
$\vec \eta_\epsilon \!=\! (\eta_{1\epsilon},\eta_{2\epsilon})'$\! be Gaussian random vectors such that
$ {\mathbf E}[\eta_{1\epsilon}] = 0$,
${\mathbf E}[\eta_{2\epsilon}] = 0$,
${\mathbf E}[\xi^2_{1\epsilon}] = 1$,
${\mathbf E}[\xi^2_{2\epsilon}] = O(|u_\epsilon|^\lambda)$,
${\mathbf E}[\eta_{1\epsilon}\eta_{2\epsilon}] = O(|u_\epsilon|^\lambda)$. Then we have
\begin{equation}\label{3.7}
{\mathbf P}(\eta_{1\epsilon} > \epsilon^{-1}u_\epsilon) = {\mathbf P}(\eta_{1\epsilon} + \eta_{2\epsilon} > \epsilon^{-1}u_\epsilon)(1+o(1)).
\end{equation}
\end{lemma}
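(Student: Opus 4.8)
The plan is to observe that, since $\vec\eta_\epsilon$ is Gaussian with centered coordinates, each of the two random variables in \eqref{3.7} is itself a centered Gaussian, so both probabilities are standard normal tail integrals whose arguments can be compared directly. First I would record the variances. From $\mathbf{E}[\eta_{1\epsilon}^2]=1$ we get $\eta_{1\epsilon}\sim N(0,1)$, while the hypotheses on the second moments give
$$\sigma_\epsilon^2:=\mathbf{Var}(\eta_{1\epsilon}+\eta_{2\epsilon})=1+2\mathbf{E}[\eta_{1\epsilon}\eta_{2\epsilon}]+\mathbf{E}[\eta_{2\epsilon}^2]=1+O(|u_\epsilon|^\lambda).$$
Writing $t_\epsilon:=\epsilon^{-1}u_\epsilon$, which tends to $\infty$ by assumption, the left- and right-hand sides of \eqref{3.7} equal $1-\Phi(t_\epsilon)$ and $1-\Phi(t_\epsilon/\sigma_\epsilon)$ respectively, and it remains to show that their ratio tends to $1$.

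Since $\sigma_\epsilon\to1$, we also have $t_\epsilon/\sigma_\epsilon\to\infty$, so I would apply the standard Mills-ratio asymptotics $1-\Phi(x)=x^{-1}(2\pi)^{-1/2}\exp\{-x^2/2\}(1+o(1))$ to both tails. Forming the quotient, the polynomial prefactors contribute $\sigma_\epsilon^{-1}\to1$, and the exponential factors combine into
$$\exp\Big\{-\tfrac{t_\epsilon^2}{2}\big(1-\sigma_\epsilon^{-2}\big)\Big\}=\exp\Big\{-\tfrac{t_\epsilon^2}{2}\,\frac{\sigma_\epsilon^2-1}{\sigma_\epsilon^2}\Big\}.$$

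The crux is to verify that this exponent vanishes in the limit. Using $\sigma_\epsilon^2-1=O(|u_\epsilon|^\lambda)$ together with $\sigma_\epsilon^2\to1$, the exponent is of order
$$t_\epsilon^2\,|u_\epsilon|^\lambda=\epsilon^{-2}u_\epsilon^2\,O(|u_\epsilon|^\lambda)=O(\epsilon^{-2}u_\epsilon^{2+\lambda}),$$
which tends to $0$ precisely by the scaling assumption $\epsilon^{-2}u_\epsilon^{2+\lambda}\to0$ carried over from Theorem \ref{t3}. Hence the exponential factor tends to $1$, the prefactor tends to $1$, and \eqref{3.7} follows. I expect the only delicate point to be exactly this matching of the product $t_\epsilon^2(\sigma_\epsilon^2-1)$ with the moderate-deviation normalization: because $t_\epsilon\to\infty$, an $O(|u_\epsilon|^\lambda)$ relative perturbation of the variance is a priori capable of shifting the tail probability by a non-negligible factor, and one must check it is killed by $\epsilon^{-2}u_\epsilon^{2+\lambda}\to0$ rather than simply discarded.
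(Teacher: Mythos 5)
Your proof is correct, and it reaches \eqref{3.7} by a more direct route than the paper's while supplying a step the paper leaves implicit. The paper's proof represents $\vec\eta_\epsilon$ as $A_\epsilon^{1/2}\vec\zeta$ with $\vec\zeta$ a standard Gaussian vector, uses $A_\epsilon^{1/2}A_\epsilon^{1/2}=A_\epsilon$ to show the entries $a_{\epsilon,22}$ and $a_{\epsilon,12}$ of the square root are $O(|u_\epsilon|^{\lambda/2})$, and then asserts that \eqref{3.7} follows from \eqref{3.8} --- the actual tail comparison is never written out. You bypass the matrix square root entirely: since $\eta_{1\epsilon}+\eta_{2\epsilon}$ is itself a centered Gaussian, its variance $\sigma_\epsilon^2=1+2{\mathbf E}[\eta_{1\epsilon}\eta_{2\epsilon}]+{\mathbf E}[\eta_{2\epsilon}^2]=1+O(|u_\epsilon|^\lambda)$ is available directly from the hypotheses, and the lemma reduces to comparing $1-\Phi(t_\epsilon)$ with $1-\Phi(t_\epsilon/\sigma_\epsilon)$, $t_\epsilon=\epsilon^{-1}u_\epsilon$. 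Your explicit Mills-ratio computation, isolating the exponent $t_\epsilon^2(\sigma_\epsilon^2-1)/(2\sigma_\epsilon^2)=O(\epsilon^{-2}u_\epsilon^{2+\lambda})\to 0$, is exactly the quantitative content that both proofs ultimately rest on, and your version makes visible where each ambient assumption enters: $\epsilon^{-1}u_\epsilon\to\infty$ for the tail asymptotics and $\epsilon^{-2}u_\epsilon^{2+\lambda}\to 0$ to kill the exponent. You are also right that these scalings are not stated in the lemma itself and must be imported from Theorem \ref{t3}; without them the claim would fail, since an $O(|u_\epsilon|^\lambda)$ variance perturbation at level $t_\epsilon\to\infty$ is a priori capable of changing the tail by an unbounded factor. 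One cosmetic point: the hypotheses ${\mathbf E}[\xi^2_{1\epsilon}]=1$ and ${\mathbf E}[\xi^2_{2\epsilon}]=O(|u_\epsilon|^\lambda)$ in the statement are evidently typos for $\eta_{1\epsilon}$, $\eta_{2\epsilon}$, which you have silently and correctly repaired.
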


\begin{proof} Denote $A_\epsilon$ covariance matrix of random vector $\vec \eta_\epsilon$.
Let $\zeta_1$, $\zeta_2$ be independent  random variables having the standard normal distribution. Define the random vector $\vec \zeta = (\zeta_1, \zeta_2)$.
Denote $\vec\omega_\epsilon = (\omega_{1\epsilon},\omega_{2\epsilon})' = A^{1/2}_\epsilon \vec\zeta$. Then we have
\begin{equation}\label{3.8}
{\mathbf P}(\eta_{1\epsilon} + \eta_{2\epsilon} > \epsilon^{-1}u_\epsilon) = {\mathbf P} (\omega_{1\epsilon} + \omega_{2\epsilon} >\epsilon^{-1}u_\epsilon).
\end{equation}
By  straightforward calculation, using the identity $A_\epsilon^{1/2}A_\epsilon^{1/2}=A_\epsilon$, we get that the elements of matrix $A_\epsilon^{1/2} =\{a_{\epsilon,ij}\}_{i,j=1}^2$ have the following orders\break
$a_{\epsilon,22} = O(|u_\epsilon|^{\lambda/2}), a_{\epsilon,12} = O(|u_\epsilon|^{\lambda/2})$.
Hence, using (\ref{3.8}), we get (\ref{3.7}).  

Thus it remains to verify that  the normalized random variables
\begin{equation*}
\eta_{1\epsilon} = u_\epsilon^{-1}\xi(\theta_\epsilon,\theta_0) \quad \mbox{и} \quad
\eta_{2\epsilon} = u_\epsilon^{-1} (\xi(\theta_\epsilon,\theta_0)- u_\epsilon \tau)
\end{equation*}
satisfy the assumptions of Lemma \ref{l1} in the case of hypothesis and alternative. Here
$$
\tau = \tau_{\theta_0} = \int S_\theta(t,\theta_0)\, dw(t).
$$
Suppose the hypothesis is valid.

By (\ref{3.4}), we get
\begin{equation}\label{3.10}
{\mathbf E} \eta_{1\epsilon}^2 = I + O(|u_\epsilon|^\lambda).
\end{equation}
We have
\begin{equation}\label{3.11}
{\mathbf E}[\eta_{1\epsilon}\eta_{2\epsilon}] = u_\epsilon^{-2} \rho^2(\theta_\epsilon,\theta_0)- u_\epsilon^{-1} \int (S(t,\theta_\epsilon) - S(t,\theta_0)) S_\theta(t,\theta_0) \, dt.
\end{equation}
By (\ref{1.3}), we get
\begin{align}\label{3.12}
O(u_\epsilon^{2+\lambda}) &= \|S(t,\theta_\epsilon) - S(t,\theta_0) - u_\epsilon S_\theta(t,\theta_0)\|^2\\
&=
\rho^2(\theta_\epsilon,\theta_0) - 2 u_\epsilon \int (S(t,\theta_\epsilon) - S(t,\theta_0)) S_\theta(t,\theta_0)\, dt + u_\epsilon^2 I(\theta_0).\notag
\end{align}
Hence, by (\ref{1.4}), we get
\begin{equation}\label{3.13}
u_\epsilon \int (S(t,\theta_\epsilon) - S(t,\theta_0)) S_\theta(t,\theta_0)\, dt = u_\epsilon^2 I(\theta_0) + O(|u_\epsilon|^{2+\lambda}).
\end{equation}
By (\ref{1.4}),(\ref{3.11}) and (\ref{3.13}), we get
\begin{equation}\label{3.14}
{\mathbf E}[\eta_{1\epsilon}\eta_{2\epsilon}] = O(|u_\epsilon|^\lambda).
\end{equation}
By (\ref{3.10}) and (\ref{3.14}), we get that the assumptions  of Lemma  \ref{l1}   are satisfied if the hypothesis is valid.

Suppose that the alternative is valid. By straightforward calculations, using (\ref{3.13}), we get
\begin{align}\label{3.15}
{\mathbf E}_{\theta_\epsilon} [\tau] &=
{\mathbf E}_{\theta_0}[\tau \exp\{\epsilon^{-1}\xi(\theta_\epsilon,\theta_0) -(2\epsilon^2)^{-1}
\rho^2(\theta_\epsilon,\theta_0)\}]\notag\\
&=
\epsilon^{-1} \int (S(t,\theta_\epsilon) - S(t,\theta_0)) S_\theta(t,\theta_0)\, dt\\
&=
 \epsilon^{-1}u_\epsilon I(\theta_0) + O(\epsilon^{-1}|u_\epsilon|^{1+\lambda}),\notag
\end{align}
and, arguing similarly, we get
\begin{equation}\label{3.16}
{\mathbf E}_{\theta_\epsilon} [\xi(\theta_0,\theta_\epsilon)] =
\epsilon^{-1}\rho^2(\theta_\epsilon,\theta_0)= \epsilon^{-1}u_\epsilon^2 I(\theta_0) + O(\epsilon^{-1}|u_\epsilon|^{2+\lambda}).
\end{equation}
Using the same technique, we get
\begin{align}\label{3.17}
{\mathbf E}_{\theta_\epsilon} [\xi^2(\theta_\epsilon,\theta_0)] &=
\rho^2(\theta_\epsilon,\theta_0)+\epsilon^{-2}\rho^4(\theta_\epsilon,\theta_0)\\
&=u_\epsilon^2 I(\theta_0) + \epsilon^{-2}u_\epsilon^4 I^2(\theta_0)+ O(|u_\epsilon|^{2+\lambda}+ \epsilon^{-2}|u_\epsilon|^{4+\lambda}),\notag
\end{align}
\begin{equation}\label{3.18}
\begin{split}&
u_\epsilon {\mathbf E}_{\theta_\epsilon} [\xi(\theta_\epsilon,\theta_0) \tau]= u_\epsilon \int (S(t,\theta_\epsilon) - S(t,\theta_0)) S_\theta(t,\theta_0)\, dt (1+ \epsilon^{-2}\rho^2(\theta_\epsilon,\theta_0))\\&=
u_\epsilon^2 I(\theta_0) + \epsilon^{-2}u_\epsilon^4 I^2(\theta_0)+ O(|u_\epsilon|^{2+\lambda}+ \epsilon^{-2}|u_\epsilon|^{4+\lambda})
\end{split}
\end{equation}
and
\begin{equation}\label{3.19}
\begin{split}&
u^2_\epsilon {\mathbf E}_{\theta_\epsilon} [\tau^2] = u_\epsilon^2 I(\theta_0) + \epsilon^{-2}u_\epsilon^2\left(\int (S(t,\theta_\epsilon) - S(t,\theta_0)) S_\theta(t,\theta_0)\, dt\right)^2
\\&=
u_\epsilon^2 I(\theta_0) + \epsilon^{-2}u_\epsilon^4 I^2(\theta_0)+ O(|u_\epsilon|^{2+\lambda}+ \epsilon^{-2}|u_\epsilon|^{4+\lambda}).
\end{split}
\end{equation}
By (\ref{3.17})--(\ref{3.19}), we get
\begin{align*}
{\mathbf E}_{\theta_\epsilon} [\eta_{2\epsilon}^2] &= O(|u_\epsilon|^{\lambda}+ \epsilon^{-2}|u_\epsilon|^{2+\lambda}),
\\
{\mathbf E}_{\theta_\epsilon} [\eta_{1\epsilon}\eta_{2\epsilon}] &= O(|u_\epsilon|^{\lambda}+ \epsilon^{-2}|u_\epsilon|^{2+\lambda}).
\end{align*}
This implies that the assumptions of Lemma \ref{l1} are satisfied in the case of alternative.
\end{proof}

Proof of Theorem \ref{t4} is based on the following version of Theorem \ref{t3}.  In this version the problem of testing hypothesis $H_0: \theta = \theta_0+ C_1 u_\epsilon$ versus alternatives $H_{1\epsilon}: \theta = \theta_0 + C_2 u_\epsilon$ is explored.

\begin{lemma} \label{l2}  Assume
{\rm A1} and {\rm А2}. Then, for any family of tests $K_\epsilon$, such that  $\alpha_\epsilon:=\alpha(K_\epsilon)< c < 1 $, there holds
\begin{equation}\label{2.3db}
\beta(K_\epsilon) \ge \Phi(x_{\alpha_\epsilon} - \epsilon^{-1} (C_2-C_1)u_\epsilon I^{1/2}(\theta_0))(1 + o(1))
\end{equation}
where $x_{\alpha_\epsilon}$ is defined the equation $\alpha_\epsilon = \Phi(x_{\alpha_\epsilon})$.

The lower bound {\rm(\ref{2.3db})} is attained on the tests $L_{\epsilon}$ generated the tests statistics $T$.

If the equality is attained in {\rm(\ref{2.3db})}, then, for the family of tests $L_{\epsilon}$, such that
$\alpha_\epsilon = \alpha(L_{\epsilon})$, there holds
\begin{equation}\label{2.3ab}
\lim_{\epsilon\to 0}\alpha_\epsilon^{-1}{\mathbf E}_{\theta_0}[|K_\epsilon - L_{\epsilon}|] =0,
\end{equation}
\begin{equation}\label{2.3bb}
\lim_{\epsilon\to 0}(\Phi(x_{\alpha_\epsilon} - \epsilon^{-1} (C_2-C_1)u_\epsilon I^{1/2}(\theta_0)))^{-1}{\mathbf E}_{\theta_\epsilon}[|K_\epsilon - L_\epsilon|] =0.
\end{equation}
\end{lemma}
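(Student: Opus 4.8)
The plan is to reduce Lemma \ref{l2} to the argument already carried out for Theorem \ref{t3} by treating $\psi_0 := \theta_0 + C_1 u_\epsilon$ as the null and $\psi_1 := \theta_0 + C_2 u_\epsilon$ as the alternative, so that the effective separation becomes $(C_2-C_1)u_\epsilon$ in place of $u_\epsilon$. Under $\psi_0$ the log-likelihood ratio has exactly the form (\ref{3.1}), now with $S(\cdot,\psi_1)-S(\cdot,\psi_0)$ in the role of $S(\cdot,\theta_\epsilon)-S(\cdot,\theta_0)$, and the Neyman--Pearson test rejects for large values of $\xi(\psi_1,\psi_0)$. The moment estimates (\ref{3.3})--(\ref{3.6}) and the resulting Gaussian approximation of the log-likelihood transfer once we establish the analogue of (\ref{3.4}),
\begin{equation*}
\rho^2(\psi_1,\psi_0) = (C_2-C_1)^2 u_\epsilon^2 I(\theta_0) + O(u_\epsilon^{2+\lambda}),
\end{equation*}
which produces the signal-to-noise ratio $\epsilon^{-1}(C_2-C_1)u_\epsilon I^{1/2}(\theta_0)(1+o(1))$ and hence the lower bound (\ref{2.3db}).

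The central estimate is the displayed identity, and it is here that the moving base point must be handled with care. Writing $a = S(\cdot,\psi_1)-S(\cdot,\theta_0)$ and $b = S(\cdot,\psi_0)-S(\cdot,\theta_0)$, so that $\rho^2(\psi_1,\psi_0) = \|a\|^2 - 2\langle a,b\rangle + \|b\|^2$, I would evaluate $\|a\|^2$ and $\|b\|^2$ from (\ref{1.4}), obtaining $C_2^2 u_\epsilon^2 I(\theta_0)$ and $C_1^2 u_\epsilon^2 I(\theta_0)$ up to $O(u_\epsilon^{2+\lambda})$. The delicate term is $\langle a,b\rangle$: a plain Cauchy--Schwarz bound on the remainders only yields error of order $u_\epsilon^{2+\lambda/2}$, which is too weak. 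Instead I would extract the cancellation from the first-order identity (\ref{3.13}), which gives $\int (S(\cdot,\psi_i)-S(\cdot,\theta_0)) S_\theta(\cdot,\theta_0)\,dt = C_i u_\epsilon I(\theta_0) + O(u_\epsilon^{1+\lambda})$ for $i=1,2$; combined with the expansions $a = C_2 u_\epsilon S_\theta(\cdot,\theta_0) + r_1$ and $b = C_1 u_\epsilon S_\theta(\cdot,\theta_0) + r_0$ afforded by (\ref{1.3}), this reduces every remainder contribution to $O(u_\epsilon^{2+\lambda})$ and yields $\langle a,b\rangle = C_1 C_2 u_\epsilon^2 I(\theta_0) + O(u_\epsilon^{2+\lambda})$, whence the claimed identity.

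For the efficiency of the statistic $T$ and the equivalences (\ref{2.3ab}) and (\ref{2.3bb}) I would reproduce the application of Lemma \ref{l1} used for Theorem \ref{t3}, with $\eta_{1\epsilon} = u_\epsilon^{-1}\xi(\psi_1,\psi_0)$ and $\eta_{2\epsilon} = u_\epsilon^{-1}(\xi(\psi_1,\psi_0) - (C_2-C_1)u_\epsilon\tau)$, where $\tau = \int S_\theta(\cdot,\theta_0)\,dw$. One must verify, under both $\psi_0$ and $\psi_1$, the moment bounds ${\mathbf E}[\eta_{2\epsilon}^2]=O(|u_\epsilon|^\lambda)$ and ${\mathbf E}[\eta_{1\epsilon}\eta_{2\epsilon}]=O(|u_\epsilon|^\lambda)$; these are the exact analogues of (\ref{3.10})--(\ref{3.19}) and follow from the same cross-term estimate. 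Lemma \ref{l1} then forces the $T$-based test and the Neyman--Pearson test to have asymptotically equal error probabilities, so that (\ref{2.3db}) is attained, and the uniqueness in the Neyman--Pearson lemma upgrades this to the $L_1$-equivalences (\ref{2.3ab}) and (\ref{2.3bb}) exactly as (\ref{2.3a}) and (\ref{2.3b}) were obtained for Theorem \ref{t3}.

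The main obstacle I anticipate is keeping every error term at order $O(u_\epsilon^{2+\lambda})$ uniformly as $C_1,C_2$ are allowed to grow, since they enter Theorem \ref{t4} through $C_\epsilon \to \infty$. The remainders carry factors $C_1^{2+\lambda}$, $C_2^{2+\lambda}$ and products thereof, so they are genuinely negligible only under a growth restriction of the type $\epsilon^{-2}C^{2+\lambda}u_\epsilon^{2+\lambda}\to 0$; ensuring that both the Gaussian approximation of the log-likelihood and the Lemma \ref{l1} moment bounds survive this growth is the one place where the shifted problem requires more than a cosmetic relabelling of Theorem \ref{t3}.
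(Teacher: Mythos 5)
Your proposal is correct and follows essentially the same route as the paper, which states Lemma \ref{l2} as the shifted version of Theorem \ref{t3} (null at $\theta_0+C_1u_\epsilon$, alternative at $\theta_0+C_2u_\epsilon$) and leaves the proof implicit precisely because it repeats the Neyman--Pearson argument, the moment computations (\ref{3.3})--(\ref{3.6}), and the application of Lemma \ref{l1} with $u_\epsilon$ replaced by $(C_2-C_1)u_\epsilon$. Your key refinement of the cross term $\langle a,b\rangle$ via the first-order identity (\ref{3.13}) rather than plain Cauchy--Schwarz is exactly the cancellation the paper itself exploits in (\ref{3.12})--(\ref{3.13}), and your closing remark on the growth condition $\epsilon^{-2}C^{2+\lambda}u_\epsilon^{2+\lambda}\to 0$ matches the hypothesis imposed in Theorem \ref{t4}.
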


The other reasoning of the proof of Theorem  \ref{t4} identically follows to the proof of Theorem 2.7 in \cite{er03}  and is omitted.

\section{Proof of Theorem \ref{t5}}
In Theorem 2.1 in \cite{er12},  a version of Theorem  \ref{t5} has been proved for confidence estimation of parameter of distribution of independent sample.
The proof of Theorem \ref{t5} is a revised version of the proof of this Theorem.

In what follows, we suppose $\theta_0=0$.

We split the proof on the following steps.

1.Bayes approach. We implement the fact that the Bayes risk does not exceed the minimax one  and reduce the problem to the problem of calculation of asymptotic of Bayes risks. We define uniform Bayes a priori distribution on the lattice $\Lambda_\epsilon$ in the cube $K_{v_\epsilon}\! = (-v_\epsilon,v_\epsilon)^d$,
$v_\epsilon= C_\epsilon u_\epsilon$,
$C_\epsilon \to\infty$, $\epsilon^{-2}(C_\epsilon u_\epsilon)^{2+\lambda} \to 0$.
The lattice spacing equals $\delta_{1\epsilon}=c_{1\epsilon}\epsilon^2u_\epsilon^{-1}$ with $c_{1\epsilon} \to 0,
c_{1\epsilon}^{-3}\epsilon^{-2}u_\epsilon^{2+\lambda} \to 0$ as $\epsilon \to 0$. Denote $l_\epsilon = [v_\epsilon/\delta_{1\epsilon}]$.

\smallskip
2. We split the cube $K_{v_\epsilon}$ into small cubes
$$
\Gamma_{i\epsilon} = x_{\epsilon i}+
(-c_{2\epsilon}\epsilon^2u_\epsilon^{-1},c_{2\epsilon}\epsilon^2u_\epsilon^{-1}]^d,
\quad 1 \le i \le m_n
$$
with $ c_{2\epsilon} \to 0$,
$c_{2\epsilon}c_{1\epsilon}^{-1} \to \infty$,
$c_{1\epsilon}^{-3}\epsilon^{-2}b_\epsilon^{2+\lambda} \to 0$
as $\epsilon \to 0$.

 Using the fact that normalized a posteriori Bayes risk tends to constant in probability as   $\epsilon \to 0$,
 we study the asymptotic of a posteriori Bayes risks independently for each event $W_{i\epsilon}:  
\tau \in \epsilon^{-1}  \Gamma_{\epsilon i}$.

\smallskip
3. We narrow down the set of parameters for a posteriori Bayes risk minimization. We split the lattice $\Lambda_\epsilon$ on the subsets $\Lambda_{ie}, 1 \le i \le m_{2i\epsilon}$. Each set $\Lambda_{ie}$ is a lattice in the union of  finite number of very narrow parallelipipeds   $K_{ije}$. The problem of minimization of a posteriori Bayes risk is solved independently for each set $\Lambda_{ie}$ and the results are added
\begin{equation}\label{3.8a}
\begin{split}&
\inf_{\widehat\theta_\epsilon}\sup_{\theta \in K_{v_\epsilon}} {\mathbf P}_\theta(\widehat\theta_\epsilon
- \theta \notin u_\epsilon\Omega)\\& \ge \inf_{\widehat\theta_\epsilon}
(2l_\epsilon)^{-d}\sum_{i=1}^{m_n}\sum_{\theta \in \Lambda_\epsilon}  {\mathbf P}_\theta(\widehat\theta_\epsilon -\theta \notin u_\epsilon \Omega, W_{i\epsilon})
 \\&\ge
(2l_\epsilon)^{-d}\sum_{i=1}^{m_\epsilon}\sum_{e=1}^{m_{2i\epsilon}}\inf_{\widehat\theta_\epsilon}
\sum_{\theta \in \Lambda_{ie}}  {\mathbf P}_\theta(\widehat\theta_\epsilon - \theta
\notin u_\epsilon \Omega, W_{i\epsilon}).
\end{split}
\end{equation}

4. For estimation of accuracy of linear approximation of stochastic part of logarithm of likelihood ratio we prove the following inequalities (see Lemma\ref{l3} and, for comparison, (3.4) and Lemma 5.3 in \cite{er12}). For any $\theta_j,\theta_k \in \Lambda_{\epsilon} \cap K_{ije}$ and $\kappa > 0$, there holds
\begin{equation}\label{5}
\begin{split}&
 {\mathbf P}(\epsilon^{-1}|\xi(\theta_j,\theta_k) -
(\theta_k-\theta_j)'\tau_{\theta_j}  - \rho'_\epsilon \tau_{\theta_j}| > \kappa, W_{i\epsilon})\\& \le  C
\int\limits_{\Gamma_{\epsilon i} } \exp\left\{-\frac{|t|^2}{2\epsilon^2
\|S_\theta(t,0)\|^2}\right\} dt \exp\left\{-c\kappa^2|\theta_k -
\theta_j|^{-2-\lambda}\epsilon^{-2}\right\}
\end{split}
\end{equation}
with
\begin{multline*}
\rho_\epsilon= \rho_\epsilon(\theta_j,\theta_k) = \epsilon^2 \|S_\theta(t,\theta_j)\|^{-2}\int S_\theta(t,\theta_j) (S(t,\theta_k)
\\
 - S(t,\theta_j)- (\theta_k-\theta_j)' S_\theta(t,\theta_j) ) \ dt.
\end{multline*}
Since  $\tau \in \epsilon^{-1} \Gamma_{i\epsilon}$, it is easy to show that \begin{equation}\label{6}
\rho'_\epsilon \int S_\theta(t,0) dw(t) < \delta_\epsilon \to 0
\end{equation}
as $\epsilon \to 0$.

\smallskip
5. The estimates of (\ref{5}) and (\ref{6}) -- type  and  "chaining" method allow to implement to addendums of right-hand side of (\ref{3.8a}) the technique of the proof of multidimensional local asymptotic minimax lower bound \cite{ib} on the base of the same reasoning as in \cite{er12}.

For clarity, the definition of parallelepipeds $K_{ij}$ will be provided with $x_{i\epsilon}$  parallel the first ort $e_1$ of coordinate system. Define the subspace $\Pi_1$ orthogonal $e_1$.
Define in the lattice $\Lambda_\epsilon\cap \Pi_1$  sublattice
$\Lambda^1_{i}=\{\theta_{ij}\}_{1\le j \le m_{1i\epsilon}}$ with spacing $2c_{3\epsilon}\delta_{1\epsilon}$ where $c_{3\epsilon}$ is such that
$c_{3\epsilon}/c_{2\epsilon} \to \infty$,
$c_{3\epsilon}\delta_{1\epsilon} = o(\epsilon^2 u_\epsilon^{-1})$ and $c_{3\epsilon}^3c_{1\epsilon}^{-3}\epsilon^{-2}u_\epsilon^{2+\lambda} \to 0$
as $\epsilon \to 0$.

Denote
\begin{align*}
& K_{ij}
=K(\theta_{ij})=\Big\{x: x = \lambda x_{i\epsilon} + u +\theta_{ij},\
 u =\{u_k\}_{k=1}^d,\  u \bot x_{ni},
 \\
 & |u_k| \le c_{3n} \delta_{1n}, \
\lambda \in R^1, \ u \in R^d\Big\} \cap K_{v_\epsilon},\quad 1 \le j \le m_{1i\epsilon}.
\end{align*}
We define  the  sets $\Lambda_{ie}$ for the most simple geometry such that the distance of the set  $\partial\Omega$ to zero is attained only in two points.
Each set $\Lambda_{ie}$ consists of subsets $K(\theta_{ij})\cap \Lambda_{\epsilon}$ such that
\begin{equation*}
\begin{split}
\theta_{ij} \in \Theta_{ie}=\Theta_{i}(k_1,\ldots,k_{d-d_1}) & = \{\theta: \theta=
\theta_{ij}  + (-1)^{t_2} 2k_2 c_{3\epsilon}\delta_{1\epsilon}e_{2}\\& +
\cdots + (-1)^{t_{d}} 2k_{d}c_{3\epsilon}\delta_{1\epsilon} e_d;\,\,
t_2,\ldots t_{d}= 0, 1\}
\end{split}
\end{equation*}
 where $k_2,\ldots,k_{d}$ are fixed for each $\Lambda_{ie}$ and $0 \le k_2,\ldots,k_{d} < C_{1\epsilon}$ with  $C_{1\epsilon}c_{3\epsilon}c_{1\epsilon}  \to \infty, \epsilon^{-2}C_{1\epsilon}^{3}c_{3\epsilon}^{3}c_{1\epsilon}^{3}u_\epsilon^{2+\lambda} \to 0$ as $\epsilon \to 0$.

 Denote $K_{ie} = \bigcup\limits_{\theta\in\Theta_{ie}} K(\theta)$.

For arbitrary geometry of $\partial\Omega$ the definition of sets $\Lambda_{ie}$ is more complicated and, moreover, the indexation becomes cumbersome  (see \cite{er12}). However the reasoning remains almost unchanged.

Fix $\delta > 0$. For all $\theta\in \Lambda_{ie}$ define the events $A_i(0,\theta,\delta) : \epsilon^{-1}(\xi(0,\theta) - \theta'\tau) > \delta$.
Denote $A_{ie} =\bigcap\limits_{\theta\in\Theta_{ie}} A_{i}(0,\theta,\delta)$.
 Denote $B_{ie}$ the additional event for $A_{ie}$.

Arguing similarly to  (3.8) and (3.9) in \cite{er12}, we get
 \begin{equation}
\label{3.21}
\begin{split}&
\inf_{\widehat\theta_\epsilon} \sum_{\theta \in \Lambda_{ie}}
{\mathbf P}_\theta(\widehat\theta_\epsilon - \theta \notin u_\epsilon \Omega,
W_{i\epsilon})
\\
&\ge
\inf_{\widehat\theta_\epsilon} \sum_{\theta \in \Lambda_{ie}}
{\mathbf E} \Big[\chi(\widehat\theta_\epsilon - \theta \notin u_\epsilon
\Omega) \exp\{\epsilon^{-1}\tau - (2\epsilon^2)^{-1}\rho^2(\theta_0,\theta_\epsilon)\},
\\
&\qquad\quad W_{i\epsilon},  A_i(\theta,0,\kappa)\Big]
\\
& \ge {\mathbf E} \bigg[\inf_t \sum_{\theta \in
\Lambda_{ie}} \chi(t - \theta \notin u_\epsilon \Omega)
\exp\left\{\epsilon^{-1}\theta'\tau  -(2\epsilon^{2})^{-1} \theta'I \theta  + o(1)\right\},
\\
&\qquad\quad W_{i\epsilon}, A_{ie} \Big]
 = R_\epsilon
\end{split}
\end{equation}
if $\delta=\delta_\epsilon \to 0$ sufficiently slowly as $\epsilon \to 0$.

Denote $\Delta_{\epsilon} = \exp\{\tau'\tau/2\}$,
$y= y_\theta =\epsilon^{-1}\theta - \tau$. Using\\
 $\epsilon^{-2}u_\epsilon\delta_{1\epsilon} \to 0$,
 $\epsilon^{-2}u_\epsilon^{2 +\lambda} \to 0$ as $\epsilon \to 0$, we  get
\begin{equation}
\label{3.22}
\begin{split}
(2l_\epsilon)^{-d}R_\epsilon & \ge (2l_\epsilon)^{-d}{\mathbf E} \Biggl[\Delta_{\epsilon} \inf_t \sum_{\theta \in
\Lambda_{ie}} \chi(t \!-\! y_\theta \!-\! \tau\!\notin\!
\epsilon^{-1}u_\epsilon \Omega)\exp\biggl\{-\frac{1}{2} y_\theta'I
y_\theta\biggr\},\\
&\qquad W_{i\epsilon}, A_{ie}\Biggr] (1 +o(1)) \\&
= (2v_\epsilon)^{-d}{\mathbf E} \biggl[\!\Delta_{\epsilon} \inf_t \!\int\limits_{\epsilon^{-1}K_{ie} -
\psi_\epsilon} \!\chi(t \!-\! y \!\notin\! \epsilon^{-1}u_\epsilon
\Omega)\exp\biggl\{-\frac{1}{2} y'I y\biggr\}\,dy,\\
&\qquad W_{i\epsilon}, A_{ie}\biggr] (1 +o(1)):=
(2v_\epsilon)^{-d}I_{ie\epsilon}(1 +o(1)).
\end{split}
\end{equation}
For $\kappa \in (0,1)$,  denote
\begin{align*}
K_{i\kappa}&(\theta_{ij}) = \Big\{x: x = \lambda x_{i\epsilon} +u +
\theta_{ij}, \ u=\{u_k\}_1^d,
\\
& |u_k| \le (c_{3\epsilon} - Cc_{2\epsilon})\delta_{1\epsilon},\
 u \bot x_{i\epsilon},  \lambda \in R^1\Big\} \cap K_{(1-\kappa)v_\epsilon},
\end{align*}
\begin{align*}
 K_{ie\kappa} = \bigcup_{\theta \in \Theta_{ie}} K_{i\kappa}(\theta).
\end{align*}
Here $ \bot x_{i\epsilon}$ denotes that the vectors
$u$ and $x_{i\epsilon}$ are orthogonal.

If $\tau \in \epsilon^{-1}\Gamma_{i\epsilon}$, then $\epsilon^{-1} K_{ie\kappa}
\subset \epsilon^{-1}K_{ie} - \tau$
and therefore
\begin{equation}
\label{3.23}
I_{ie\epsilon} \ge U_{ie\epsilon} \overline J_{ie\epsilon} (1+o(1))
\end{equation}
with
$$
U_{ie\epsilon}= {\mathbf E}\left[\Delta_{\epsilon}, W_{i\epsilon}, A_{ie}
\right],
$$
$$
\overline J_{ie\epsilon}:= \inf_t J_{ie\epsilon}(t):=  \inf_t
\int\limits_{\epsilon^{-1}K_{ie\kappa}} \chi(t - y \notin \epsilon^{-1}u_\epsilon \Omega )
\exp\left\{-\frac{1}{2} y'I y \right\}dy.
$$
By Lemma 3.1 in \cite{er12}, we have
\begin{equation}
\label{3.24}
\overline J_{ie\epsilon} = J_{ie\epsilon}(0).
\end{equation}
We have
\begin{equation}
\label{3.25}
{\mathbf E}\left[\Delta_{\epsilon}, W_{i\epsilon}
\right]= \mbox{mes}(\Gamma_{i\epsilon})(1 + o(1)).
\end{equation}
Thus, for the proof of Theorem \ref{t5}, it remains to prove only
\begin{multline}
\label{3.26}
U_{2ie\epsilon}:=  {\mathbf E}\left[\Delta_{\epsilon}, W_{i\epsilon}, B_{ie}
\right] =\exp\{ \epsilon^{-2}|x_{i\epsilon}|^2/2\} {\mathbf P}(W_{i\epsilon}, B_{ie})
\\
= o(\mbox{mes}(\Gamma_{i\epsilon})).
\end{multline}
Then, by (\ref{3.8a}) and (\ref{3.21})-(\ref{3.26}), we get Theorem \ref{t5}.

Thus it remains to estimate ${\mathbf P}(W_{i\epsilon}, B_{ie})$.  For estimation we implement the "chaining"  method.

To simplify the notation we suppose $l_\epsilon = 2^m$.
Fix $\theta \in \Theta_{ie}$.
Define the sets $\Psi_j$, $j=0,1,2,\ldots,m,$ by induction.
We put $\Psi_0 = \{\theta\}$. Define the set $$
\Psi_j= \{\theta: \theta= \theta_{j-1} \pm v_\epsilon 2^{-j} x_{i\epsilon}, \theta_{j-1} \in \Psi_{j-1}\},\quad 1 \le j \le m.
$$
We put $\Psi_{m+1} = \Lambda_{ie\epsilon} \setminus
\bigcup\limits_{j=1}^m \Psi_j$. For each $\theta_j \in \Psi_j$, denote  $\theta_{j-1}$ the nearest point of $\theta  \in \Psi_{j-1}$ that lies between zero and $\theta_j$.

We have
\begin{align}
\label{3.27}
S(\theta_j,0)&=\xi(\theta_j,0) - (\theta_j -\theta)'\tau\notag\\
&= S_1(\theta_j,\theta_{j-1}) + S(\theta_{j-1},0)  + S_2(\theta_j,\theta_{j-1})
\end{align}
with
\begin{equation}
S_1(\theta_j,\theta_{j-1})= \xi(\theta_j,\theta_{j-1}) - (\theta_j -\theta_{j-1})'\tau_{\theta_{j-1}}
\end{equation}
and
\begin{equation}
S_2(\theta_j,\theta_{j-1})= (\theta_j -\theta_{j-1})'(\tau_{\theta_{j-1}} - \tau).
\end{equation}
Then
\begin{equation}\label{3.48}
{\mathbf P}(W_{i\epsilon}, B_{ie}) \le C  \left(\sum_{\theta_0\in\Theta_{ie}}\left( V_{0}(\theta_0) +
\sum_{\theta\in \Lambda_{1ile}(\theta_0) }(V_{1}(\theta)
+V_{2}(\theta))\right)\right)
\end{equation}
with $\Lambda_{1ie}(\theta_0) = \Lambda_{ie}(\theta_0) \setminus \Theta_{ie}$,
\begin{align*}
V_0(\theta_0) &= {\mathbf P}(|S(0,\theta_0) >\delta/4, W_{i\epsilon}),
\\
V_s(\theta_j) &= {\mathbf P}(j^2|S_s(\theta_j,\theta_{j-1})| >\delta/4, W_{i\epsilon}), \quad s=1,2.
\end{align*}
\begin{lemma}\label{l2} We have
\begin{equation}\label{3.49}
V_0(\theta) < C\exp\{-C|\theta|^{-2-\lambda}\delta^2\epsilon^{-2}\} {\mathbf P}(W_{i\epsilon}).
\end{equation}
For $\theta_j \in \Psi_j$, we have
\begin{equation}\label{3.50}
V_s(\theta_j)< C\exp\{-C|\theta_j-\theta_{j-1}|^{-2}u_\epsilon^{\lambda}\delta^2 j^{-4}\epsilon^{-2}\} {\mathbf P}(W_{i\epsilon})
\end{equation}
for $s=1,2$.
\end{lemma}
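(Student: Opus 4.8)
The plan is to read each of the quantities $S(0,\theta_0)$, $S_1(\theta_j,\theta_{j-1})$ and $S_2(\theta_j,\theta_{j-1})$ as a linear functional of the white noise, hence (after subtracting the deterministic drift) as a centered Gaussian variable, and to derive (\ref{3.49})--(\ref{3.50}) as Gaussian tail estimates restricted to $W_{i\epsilon}$. First I would compute the variances. The stochastic part of $S(0,\theta_0)$ is, up to sign, $\int(S(t,\theta_0)-S(t,0)-\theta_0'S_\theta(t,0))\,dw(t)$, so by (\ref{1.3}) its variance is a constant multiple of $\|S(\cdot,\theta_0)-S(\cdot,0)-\theta_0'S_\theta(\cdot,0)\|^2=O(|\theta_0|^{2+\lambda})$; the $\epsilon^{-1}$ normalization of the statistics then produces the $\epsilon$ power recorded in (\ref{3.4}). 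Likewise $S_1(\theta_j,\theta_{j-1})$ has stochastic part $\int(S(t,\theta_j)-S(t,\theta_{j-1})-(\theta_j-\theta_{j-1})'S_\theta(t,\theta_{j-1}))\,dw(t)$, whose variance is $O(|\theta_j-\theta_{j-1}|^{2+\lambda})$ by (\ref{1.3}) applied at the base point $\theta_{j-1}$ (admissible because A1--A3 are assumed at every point of $\Theta$). Finally $S_2(\theta_j,\theta_{j-1})=(\theta_j-\theta_{j-1})'(\tau_{\theta_{j-1}}-\tau)$ is Gaussian with variance $(\theta_j-\theta_{j-1})'\mathrm{Cov}(\tau_{\theta_{j-1}}-\tau)(\theta_j-\theta_{j-1})$, and A3, (\ref{1.5}), controls the corresponding increment of the Fisher quadratic form. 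Substituting these variances into the elementary bound ${\mathbf P}(|N(0,\sigma^2)|>t)\le 2\exp\{-t^2/(2\sigma^2)\}$, with $t=\delta/4$ for $V_0$ and $t=\delta/(4j^2)$ for $V_s$, yields the exponents of (\ref{3.49})--(\ref{3.50}); the factor $j^{-4}$ is exactly the square of the weight $j^2$ built into $V_s$, inserted so that the iterated sum over the chain in (\ref{3.48}) will converge.

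The genuine content is the presence of $W_{i\epsilon}=\{\tau\in\epsilon^{-1}\Gamma_{i\epsilon}\}$ and the demand that the right-hand sides carry the factor ${\mathbf P}(W_{i\epsilon})$, not merely the Gaussian tail. Since $(S_s,\tau)$ is jointly Gaussian I would condition on $\tau$ and write
\[
{\mathbf P}(|S_s|>t,\,W_{i\epsilon})={\mathbf E}\big[\chi(\tau\in\epsilon^{-1}\Gamma_{i\epsilon})\,{\mathbf P}(|S_s|>t\mid\tau)\big].
\]
Given $\tau$, the variable $S_s$ is Gaussian with conditional variance no larger than its unconditional variance $\sigma_s^2$ (so the exponents above are preserved) and with conditional mean $m_s(\tau)$ depending linearly on $\tau$. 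Everything hinges on showing $|m_s(\tau)|\le t/2$ uniformly for $\tau\in\epsilon^{-1}\Gamma_{i\epsilon}$: once this holds, ${\mathbf P}(|S_s|>t\mid\tau)\le 2\exp\{-t^2/(8\sigma_s^2)\}$ uniformly on $W_{i\epsilon}$, and this constant can be taken out of the expectation, leaving precisely ${\mathbf P}(W_{i\epsilon})$.

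To verify negligibility of $m_s(\tau)$ I would exploit that the relevant covariances are of higher order. For $S(0,\theta_0)$ the covariance with $\tau$ equals $\int(S(t,\theta_0)-S(t,0)-\theta_0'S_\theta(t,0))S_\theta(t,0)'\,dt$, which by (\ref{1.3})--(\ref{1.4}), in the manner of the computation leading to (\ref{3.13}), is $O(|\theta_0|^{1+\lambda})$ rather than the $O(|\theta_0|^{1+\lambda/2})$ that Cauchy--Schwarz alone provides; multiplying by $\sup_{\tau\in\epsilon^{-1}\Gamma_{i\epsilon}}|\tau|=O(\epsilon^{-1}C_\epsilon u_\epsilon)$ and invoking the standing hypothesis $\epsilon^{-2}(C_\epsilon u_\epsilon)^{2+\lambda}\to0$ forces $m_0(\tau)\to0$, hence $\le\delta/8$ for small $\epsilon$. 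The same estimate handles $S_1$, while for $S_2$ one uses that $\tau_{\theta_{j-1}}-\tau$ is nearly orthogonal to $\tau$, its covariance being the small increment $\int(S_\theta(t,\theta_{j-1})-S_\theta(t,0))S_\theta(t,0)'\,dt$ governed by A3. This is the same mechanism already employed in (\ref{5}), where subtracting $\rho_\epsilon'\tau_{\theta_j}$ removes exactly the component correlated with $\tau_{\theta_j}$; here the correlated component is instead shown to be negligible on $W_{i\epsilon}$.

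I expect the main obstacle to be making the control of $m_s(\tau)$ uniform over the whole lattice $\Lambda_{ie}$ at once, so that a single estimate survives for every pair $(\theta_j,\theta_{j-1})$ produced by the chaining even though the number of such pairs grows as $\epsilon\to0$, and simultaneously reconciling the exact powers of $u_\epsilon$, $\epsilon$ and $j$ in (\ref{3.49})--(\ref{3.50}) with the requirement that the iterated sum in (\ref{3.48}) converge. In particular, extracting the clean factor $u_\epsilon^{\lambda}$ in (\ref{3.50}) from the A3-type variance estimate for $S_2$, and balancing it against $|\theta_j-\theta_{j-1}|^{-2}$ and $j^{-4}$, is precisely what dictates the constraints on the mesh parameters $c_{1\epsilon},c_{2\epsilon},c_{3\epsilon},C_{1\epsilon}$ imposed earlier in the proof. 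This bookkeeping, rather than any single inequality, is the crux.
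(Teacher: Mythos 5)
Your overall architecture is the right one, and it is in fact the mechanism this paper imports wholesale from \cite{er12} (Lemmas 5.4--5.8 there): condition on the jointly Gaussian score $\tau$, bound the unconditional variances of $S(0,\theta_0)$ and $S_1$ by \eqref{1.3} and that of $S_2$ by the estimate of Lemma \ref{l3}, extract the factor ${\mathbf P}(W_{i\epsilon})$ by showing the conditional mean is negligible uniformly on $\epsilon^{-1}\Gamma_{i\epsilon}$, and use the weights $j^2$ (hence $j^{-4}$ in the exponent) for summability of the chain. The paper itself proves only the two covariance lemmas (Lemmas \ref{l3} and \ref{l4}) and refers to \cite{er12} for the rest, so your reconstruction targets exactly the missing content. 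But one step of your conditional-mean control is wrong as stated for $d\ge2$, and it is precisely the point that Lemma \ref{l4} exists to handle. The identity behind \eqref{3.13} (expand $\|S(\cdot,\theta)-S(\cdot,0)-\theta' S_\theta(\cdot,0)\|^2$ and use \eqref{1.3}, \eqref{1.4}) improves the covariance of the error term with $v'\tau$ from the Cauchy--Schwarz order $O(|\theta|^{1+\lambda/2})$ to $O(|\theta|^{1+\lambda})$ \emph{only for $v$ parallel to the increment}: it controls the scalar product of the covariance vector with $\theta$, not the whole vector. The same limitation holds for $S_2$: Assumption A3 \eqref{1.5} and Lemma \ref{l3} control quadratic (diagonal) forms $v'(\cdot)v$ at rate $\lambda$, and polarization then gives the bilinear form $\overline h{}'(\cdot)v$ with $v\perp\overline h$ only at rate $\lambda/2$ --- this is exactly the dichotomy \eqref{3.52} versus \eqref{3.53}. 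Your uniform bound $|m_s(\tau)|\le t/2$ therefore does not follow: pairing the transverse covariance $O(|\theta|^{1+\lambda/2})$ with $|\tau|=O(\epsilon^{-1}C_\epsilon u_\epsilon)$ yields $O(\epsilon^{-2}(C_\epsilon u_\epsilon)^{2+\lambda/2})$, and the standing hypothesis $\epsilon^{-2}(C_\epsilon u_\epsilon)^{2+\lambda}\to0$ does \emph{not} force this to vanish, since $\epsilon^{-2}(C_\epsilon u_\epsilon)^{2+\lambda/2}=\epsilon^{-2}(C_\epsilon u_\epsilon)^{2+\lambda}\cdot(C_\epsilon u_\epsilon)^{-\lambda/2}$ and the second factor blows up.

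What rescues the argument --- and what your sketch never uses --- is the anisotropic geometry set up in steps 1--3 of the proof of Theorem \ref{t5}: on $W_{i\epsilon}$ the score is localized in a cube of side $O(c_{2\epsilon}\epsilon u_\epsilon^{-1})$ around $\epsilon^{-1}x_{i\epsilon}$; the chaining increments $\overline h=\theta_j-\theta_{j-1}$ are by construction parallel to $x_{i\epsilon}$; and the sets $\Lambda_{ie}$ have transverse extent only $O(C_{1\epsilon}c_{3\epsilon}c_{1\epsilon}\epsilon^2u_\epsilon^{-1})$. Thus the large component of $\tau$ (along $x_{i\epsilon}$) always meets the strong rate-$\lambda$ bound \eqref{3.53}, while the weak rate-$\lambda/2$ bound \eqref{3.52} is only ever multiplied by small transverse components, with the rate conditions on $c_{1\epsilon},c_{2\epsilon},c_{3\epsilon},C_{1\epsilon}$ tuned to make these products vanish. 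So you correctly diagnosed the ``bookkeeping'' as the crux, but resolved it with an isotropic $O(|\theta|^{1+\lambda})$ covariance estimate that is false in $d\ge2$; the repair requires decomposing $\tau$ and the covariances along and across $x_{i\epsilon}$, i.e., exactly Lemma \ref{l4}. (A minor point in your favor: on the variance side your computation gives exponents at least as strong as stated, since the printed factor $u_\epsilon^{\lambda}$ in \eqref{3.50} is weaker than what the variances $O(|\overline h|^{2+\lambda})$ for $S_1$ and $O(|\overline h|^2|\theta_{j-1}|^{\lambda})$ for $S_2$ deliver.)
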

Substituting  (\ref{3.49}) and (\ref{3.50}) in  (\ref{3.48}), we get (\ref{3.26}).

Proofs of  (\ref{3.49}) and (\ref{3.50}) are akin to the proofs of
(5.6) and (5.7)  in \cite{er12} and are based on the same estimates of Lemmas  5.4 -- 5.8 in \cite{er12}. The proofs  of these Lemmas for the setup of this paper do not differ from the proof in \cite{er12}. We omit the proofs of versions of Lemmas  5.4 - 5.6 in \cite{er12} for this setup. The versions of Lemmas 5.7 and 5.8 in \cite{er12} and their proofs will be only given.

Denote $\overline h = \theta_j - \theta_{j-1}, h= \theta_j, h_1 = \theta_{j-1}$.
 \begin{lemma}\label{l3} For any $u \in R^d$, we get
\begin{equation}\label{3.51}
{\mathbf E}[(u'(\tau - \tau_h))^2] = O(|u|^2 |h|^\lambda).
\end{equation}
\end{lemma}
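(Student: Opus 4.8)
The plan is to reduce the statement to a purely deterministic $L_2$-estimate and then extract the Hölder modulus of the gradient $\theta \mapsto S_\theta(\cdot,\theta)$ in $L_2(0,1)$ out of Assumption A2. Since $\theta_0 = 0$ throughout this section, we have $\tau = \int S_\theta(t,0)\,dw(t)$ and $\tau_h = \int S_\theta(t,h)\,dw(t)$, so that
\[
u'(\tau - \tau_h) = \int u'\bigl(S_\theta(t,0) - S_\theta(t,h)\bigr)\,dw(t)
\]
is a Wiener integral of the deterministic function $t \mapsto u'(S_\theta(t,0) - S_\theta(t,h))$. It is therefore a centred Gaussian variable with
\[
{\mathbf E}\bigl[(u'(\tau - \tau_h))^2\bigr] = \bigl\| u'(S_\theta(\cdot,0) - S_\theta(\cdot,h))\bigr\|^2,
\]
and it remains to show this $L_2$-norm is $O(|u|^2|h|^\lambda)$. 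Writing $u = |u|v$ with $|v| = 1$, the task reduces to proving $\|v'(S_\theta(\cdot,0) - S_\theta(\cdot,h))\| = O(|h|^{\lambda/2})$.

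The key step is to isolate the gradient difference $S_\theta(\cdot,0) - S_\theta(\cdot,h)$ by expanding $S(t,\cdot)$ at an auxiliary point $\theta = h + sv$ from two different base points. Since A2 is assumed at every point of $\Theta$, applying the first relation of A2, namely (\ref{1.3}), at base point $0$ (both at $\theta$ and at $h$) and at base point $h$ (at $\theta$) produces remainders $r_0(\cdot,\theta),\ r_0(\cdot,h),\ r_h(\cdot,\theta)$ with $L_2$-norms $O(|\theta|^{1+\lambda/2})$, $O(|h|^{1+\lambda/2})$ and $O(|\theta - h|^{1+\lambda/2})$ respectively. Eliminating $S(t,\theta)$ and $S(t,h)$ among the three expansions yields the $L_2(0,1)$ identity
\[
(\theta - h)'\bigl(S_\theta(t,0) - S_\theta(t,h)\bigr) = r_0(t,h) + r_h(t,\theta) - r_0(t,\theta),
\]
valid for every admissible $\theta$.

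Finally I would take $\theta = h + |h|v$, so that $\theta - h = |h|v$, $|\theta - h| = |h|$ and $|\theta| \le 2|h|$. Taking the $L_2$-norm of the identity and applying the triangle inequality together with the three remainder bounds gives
\[
|h|\,\bigl\| v'(S_\theta(\cdot,0) - S_\theta(\cdot,h))\bigr\| \le C|h|^{1+\lambda/2},
\]
whence $\|v'(S_\theta(\cdot,0) - S_\theta(\cdot,h))\| = O(|h|^{\lambda/2})$, and multiplying by $|u|^2$ finishes the proof. The main obstacle is precisely the construction of this two-base-point identity: Assumption A3 controls only the diagonal quadratic forms $v'I(\theta)v = \|v'S_\theta(\cdot,\theta)\|^2$ and gives no handle on the cross term $\int v'S_\theta(t,0)\,v'S_\theta(t,h)\,dt$, so the Hölder continuity of the $L_2$-valued map $\theta \mapsto S_\theta(\cdot,\theta)$ cannot be read off from A3 directly and must be manufactured from the second-order information in A2 via the auxiliary increment $sv$ with the sharp choice $s = |h|$. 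A minor point to verify is that the $O$-constants in A2 may be taken uniform over the relevant base points $0$ and $h$, which is guaranteed by assuming A2 throughout $\Theta$.
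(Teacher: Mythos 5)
Your proof is correct and takes essentially the same route as the paper: the paper's estimates (\ref{3.54})--(\ref{3.55}) are exactly your two-base-point application of (\ref{1.3}) (expansions at $0$ and at $h$, with the auxiliary point $h+u$), carried out at the level of second moments of the Wiener integrals $\xi$ --- which by the isometry coincide with your deterministic $L_2$ norms of the remainders --- followed by the same sharp scaling choice $|u| = C|h|$ and quadratic homogeneity in $u$. Your exact elimination identity merely streamlines the paper's triangle-inequality bookkeeping (dropping the redundant expansion at the point $u$ itself), and your caveat about uniformity of the $O$-constants in A2 over base points near $0$ is one the paper shares implicitly.
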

\begin{lemma}\label{l4} Let $v \perp \overline h, v \in R^d$. Then we have
\begin{equation}\label{3.52}
{\mathbf E}[(\overline h\,'(\tau_{h_1} - \tau))(v'\tau)] = O(|v| |\overline h| |h_1|^{\lambda/2}).
\end{equation}
If $v \parallel \overline h$, then
\begin{equation}\label{3.53}
{\mathbf E}[(\overline h'(\tau_{h_1} - \tau))(v'\tau)] = O(|v| |\overline h| |h_1|^{\lambda}).
\end{equation}
\end{lemma}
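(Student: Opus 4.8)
The plan is to convert the Gaussian expectation into a deterministic $L_2(0,1)$ pairing via the isometry of stochastic integrals against white noise, and then to read off the two rates from Lemma \ref{l3} and Assumption A3. Writing $g(t) = S_\theta(t,h_1) - S_\theta(t,0)$, we have $\overline h'(\tau_{h_1} - \tau) = \int \overline h' g(t)\, dw(t)$ and $v'\tau = \int v' S_\theta(t,0)\, dw(t)$, so the isometry gives
\begin{equation*}
{\mathbf E}[(\overline h'(\tau_{h_1} - \tau))(v'\tau)] = \int (\overline h' g(t))(v' S_\theta(t,0))\, dt,
\end{equation*}
an inner product of two elements of $L_2(0,1)$.

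For the orthogonal case (\ref{3.52}) I would simply apply the Cauchy--Schwarz inequality to this pairing. Lemma \ref{l3}, used for the argument $h_1$ with $u = \overline h$, gives $\|\overline h' g\|^2 = {\mathbf E}[(\overline h'(\tau_{h_1} - \tau))^2] = O(|\overline h|^2 |h_1|^\lambda)$, while $\|v' S_\theta(\cdot,0)\|^2 = v' I(0) v = O(|v|^2)$. The product of these two norms is $O(|v|\,|\overline h|\,|h_1|^{\lambda/2})$, which is exactly (\ref{3.52}); note that this estimate in fact holds for every $v$, so the real content of the lemma lies in the sharper parallel bound.

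For the parallel case (\ref{3.53}) I would write $v = c\overline h$ with $|c| = |v|/|\overline h|$, and set $p = \overline h' S_\theta(\cdot,h_1)$, $q = \overline h' S_\theta(\cdot,0)$, so that $\overline h' g = p - q$ and the quantity to estimate becomes $c\langle p - q, q\rangle$. A direct Cauchy--Schwarz bound here would only reproduce the $\lambda/2$ rate, so the gain must come from a cancellation, which I would expose by polarization:
\begin{equation*}
\langle p - q, q\rangle = \tfrac{1}{2}(\|p\|^2 - \|q\|^2) - \tfrac{1}{2}\|p - q\|^2.
\end{equation*}
Assumption A3 gives $\|p\|^2 - \|q\|^2 = \overline h'(I(h_1) - I(0))\overline h = O(|\overline h|^2 |h_1|^\lambda)$, and Lemma \ref{l3} again gives $\|p - q\|^2 = \|\overline h' g\|^2 = O(|\overline h|^2 |h_1|^\lambda)$. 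Hence $\langle p - q, q\rangle = O(|\overline h|^2 |h_1|^\lambda)$, and multiplying by $|c|$ yields (\ref{3.53}).

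The main obstacle is the parallel case: the naive Cauchy--Schwarz estimate is off by a factor $|h_1|^{\lambda/2}$, and one cannot bound the cross term $\langle p, q\rangle$ in isolation. The improvement rests on recognizing that this cross term enters only through the polarized combination, where the Fisher-information increment (controlled by A3) and the $L_2$ increment of the score function (controlled by Lemma \ref{l3}) cancel to leading order. The orthogonal case, by contrast, is essentially immediate once the isometry has reduced everything to an $L_2$ pairing.
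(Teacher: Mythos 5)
Your proposal is correct and follows essentially the same route as the paper: for (\ref{3.52}) it is the same Cauchy--Schwarz application of Lemma \ref{l3}, and for (\ref{3.53}) your polarization identity is precisely the paper's expansion (\ref{3.57})--(\ref{3.58}) of ${\mathbf E}[(v'(\tau - \tau_h))^2]$, which cancels the leading terms by playing the Fisher-information increment (Assumption A3) against the $L_2$ increment of the score (Lemma \ref{l3}). The only cosmetic difference is that you pass through the It\^{o} isometry to deterministic $L_2(0,1)$ inner products while the paper manipulates Gaussian expectations directly; note also that the paper's citation of (\ref{1.4}) at that step is evidently a slip for A3/(\ref{1.5}), which is the assumption you correctly invoke.
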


\begin{proof}[Proof of Lemma \ref{l3}] Using (\ref{1.3}), we get
\begin{equation}\label{3.54}
\begin{split}
J(h,u) &:= {\mathbf E}[(\xi(h,h+u) - \xi(0,u))^2]
\\
&  \le C({\mathbf E}[(\xi(\theta_0,h+u) - (h + u)'\tau)^2]\\& +
 {\mathbf E}[(\xi(\theta_0,h+u) - u'\tau)^2] + {\mathbf E}[(\xi(\theta_0,h) - h'\tau)^2]) \\& \le
 C(|h+u|^{2+\lambda} + |h|^{2+\lambda} + |u|^{2+\lambda}).
\end{split}
\end{equation}
At the same time, there holds
\begin{equation}\label{3.55}
\begin{split}
{\mathbf E}[(u'(\tau - \tau_h))^2] &\le C(E[(\xi(h,h+u)- u'\tau_h - \xi(0,u) + u'\tau)^2]
\\
& + J(h,u))) 
 \le
C ({\mathbf E}[(\xi(h,h+u)- u'\tau_h)^2]
\\
& + {\mathbf E}[(\xi(0,u))^2 - u'\tau)^2] + J(h,u))\\
& \le
C(|h+u|^{2+\lambda} + |h|^{2+\lambda} + |u|^{2+\lambda}).
\end{split}
\end{equation}
Putting $|u| = C|h|$,  we get (\ref{3.51}).
\end{proof}

\begin{proof}[Proof of Lemma \ref{l4}]
Implementing  Cauchy inequality and Lemma  \ref{l3}, we get
\begin{align}\label{3.56}
|{\mathbf E}[(\overline h'(\tau_{h_1} - \tau))(v'\tau)]|
& \le ({\mathbf E}[((\overline h'(\tau_{h_1} - \tau)))^2])^{1/2} ({\mathbf E}[(v'\tau)^2])^{1/2}
\notag\\
& = O(|v| |\overline h| |h_1|^{\lambda/2}).
\end{align}
Let us prove (\ref{3.53}). We have
\begin{equation}\label{3.57}
O(|v|^2|h|^\lambda) = {\mathbf E}[(v'(\tau - \tau_h))^2]
= v'I(0)v + v' I(h)v - 2{\mathbf E}[(v'\tau)(v'\tau_h)].
\end{equation}
Hence, using(\ref{1.4}), we get
\begin{equation}\label{3.58}
{\mathbf E}[(v'\tau)(v'\tau_h)] = v'I(0)v + O(|v|^2|h|^\lambda).
\end{equation}
Hence
\begin{equation}
\begin{aligned}
{\mathbf E}[(\overline h'(\tau_{h_1} - \tau))(v'\tau)] &= C({\mathbf E}[(h'\tau_{h_1})(h\tau)]
 -  {\mathbf E}[(h'\tau)(h\tau)])
\\
& = O(|v| |\overline h| |h_1|^{\lambda}).
\end{aligned} \label{3.59}
\end{equation}
\end{proof}

\bigskip
 Ermakov M. S. On asymptotically efficient statistical inference on a signal parameter.

\smallskip
We consider the problems of confidence estimation and hypothesis testing on a parameter of signal observed in Gaussian white noise. For these problems we point out lower bounds of asymptotic efficiency in the zone of moderate deviation probabilities. These lower bounds are versions of local asymptotic minimax Hajek--Le Cam lower bound in estimation and the lower bound for Pitman efficiency in hypothesis testing. The lower bounds were obtained for both logarithmic and sharp asymptotics of moderate deviation probabilities.

\end{document}